\definecolor{citegreen}{rgb}{0,0.6,0}
\definecolor{refred}{rgb}{0.8,0,0}
\newcommand{\N}{\mathbb{N}}
\newcommand{\R}{\mathbb{R}}
\newcommand{\SSS}{\mathbb{S}}
\newcommand{\cH}{\mathcal{H}}
\newcommand{\cL}{\mathcal{L}}
\newcommand{\sE}{\mathscr{E}}
\newcommand{\sF}{\mathscr{F}}
\newcommand{\sH}{\mathscr{H}}
\newcommand{\Imm}{{\mathrm{Imm}}\,}
\newcommand{\id}{{\mathrm{Id}}}
\newcommand{\pa}{\partial}
\newcommand{\con}{\subseteq}
\newcommand{\na}{\nabla}
\newcommand{\ep}{\varepsilon}
\newcommand{\ga}{\gamma}
\newcommand{\al}{\alpha}
\newcommand{\de}{\delta}
\newcommand{\la}{\lambda}
\newcommand{\ro}{\rho}
\newcommand{\si}{\sigma}
\newcommand{\Om}{\Omega}
\newcommand{\vp}{ X }
\DeclarePairedDelimiter\scal{\langle}{\rangle}
\theoremstyle{plain}
\newtheorem{thm}{Theorem}[section] 
\newtheorem{prop}[thm]{Proposition}
\newtheorem{lemma}[thm]{Lemma}
\newtheorem{cor}[thm]{Corollary}
\theoremstyle{definition}
\newtheorem{defn}[thm]{Definition}
\theoremstyle{remark}
\newtheorem{remark}[thm]{Remark}
\numberwithin{equation}{section}
\title[The \L ojasiewicz--Simon inequality for the elastic flow]{The \L ojasiewicz--Simon inequality for the elastic flow}
\author{Carlo Mantegazza}
\address{Carlo Mantegazza\\
	Dipartimento di Matematica e Applicazioni,
	Universit\`a di Napoli, Via Cintia, Monte S. Angelo 80126 Napoli,
	Italy}
\email{c.mantegazza@sns.it}
\author{Marco Pozzetta}
\address{Marco Pozzetta\\
Dipartimento di Matematica, Universit\`{a} di Pisa, Largo Bruno Pontecorvo 5, 56127 Pisa, Italy}
\email{pozzetta@mail.dm.unipi.it}
\date{\today}
\keywords{Elastica, geometric flow, \L ojasiewicz--Simon gradient inequality, smooth convergence}
\subjclass[2010]{53E40, 35R01, 46N20}
\begin{document}

\begin{abstract}
We define the elastic energy of smooth immersed closed curves in $\R^n$ as the sum of the length and the $L^2$--norm of the curvature, with respect to the length measure. We prove that the $L^2$--gradient flow of this energy smoothly converges asymptotically to a critical point. One of our aims was to the present the application of a \L ojasiewicz--Simon inequality, which is at the core of the proof, in a quite concise and versatile way.
\end{abstract}

\maketitle

\tableofcontents

\section{Introduction}

We consider the flow by the gradient of the ``classical'' elastic energy  associated to any regular closed curve $\ga:\SSS^1\to \R^n$
\[
\sE(\ga) = \int_{\SSS^1} \left( 1+\frac12 |k|^2\right) \,ds,
\]
where $k$ is the curvature vector of $\ga$ and $ds=|\ga'(\theta)|\,d\theta$ denotes the canonical arclength measure of $\gamma$. We remark that $\sE$ is a ``geometric'' functional, that is, its value is independent of the parametrization of the curve, moreover it is well defined for every $\gamma\in H^2(\SSS^1,\R^n)\subseteq C^1(\SSS^1,\R^n)$.

We will call $\tau=|\ga'(\theta)|^{-1}\ga'(\theta)$ the unit tangent vector of $\ga$ (which is well defined as $\gamma$ is regular, that is, $|\ga'(\theta)|\not=0$ for every $\theta\in\SSS^1$) and we will denote by $\pa_s = |\ga'(\theta)|^{-1}\pa_\theta$ the differentiation with respect to the arclength $s$ of $\gamma$ (where $\partial_\theta$ is the standard derivative with respect to $\theta\in\SSS^1$, so that $\gamma'=\partial_\theta\gamma$). Recall that then the curvature is given by $k=\pa_{ss}^2 \ga=\pa_s\tau$, which is a normal vector field along $\gamma$.\\
If $\vp:\SSS^1\to\R^n$ is a vector field along $\gamma$, we define 
$$
\na^\perp\vp = \pa_s \vp - \scal{\pa_s \vp, \tau}\tau,
$$
that is, the normal projection of the arclength derivative of $\vp$ (this operator, restricted to normal vector fields along $\gamma$, coincides with the canonical connection on the normal bundle of $\gamma$ in $\R^n$, which is compatible with the metric). 

We will see in the next section that the ``elastic flow'' associated to the functional $\sE$, of an initial smooth regular curve $\ga_0:\SSS^1\to\R^n$, is given by a smooth solution $\ga:[0,T)\times \SSS^1 \to \R^n$ of the PDE problem
\begin{equation}\label{eq:DefFlusso}
\begin{cases}
\pa_t \ga = -\na^\perp\na^\perp k -|k|^2 k/2 + k\\
\ga(0,\cdot)=\ga_0
\end{cases}
\end{equation}
where $k=k(t,\theta)$ is the curvature vector of the curve $\ga(t,\theta)$ at time $t$.
It is well known (\cite{PoldenThesis} in codimension one and~\cite{DzKuSc02} in any codimension, see also~\cite{Ma02}) that for every initial smooth regular curve $\ga_0$, the elastic flow exists smooth uniquely for every positive time (that is,  $T=+\infty$) and it ``sub--converges'' to a smooth critical point $\ga_\infty:\SSS^1\to\R^n$ of the functional $\sE$. More precisely, we can state the following sub--convergence result.

\begin{prop}[\cite{PoldenThesis, DzKuSc02}]\label{thm:SubConvergence}
Let $\ga_0:\SSS^1\to \R^n$ be a smooth regular curve. Then there exists a unique smooth solution $\ga:[0,+\infty)\times \SSS^1\to \R^n$ of problem~\eqref{eq:DefFlusso}. Moreover, there exist a smooth critical point $\ga_\infty:\SSS^1\to\R^n$ of $\sE$, a sequence of times $t_j\nearrow+\infty$ and a sequence of points $p_j\in\R^n$ such that
\[
\ga(t_j,\cdot)-p_j \xrightarrow[j\to+\infty]{} \ga_\infty,
\]
in $C^m(\SSS^1,\R^n)$ for any $m\in\N$, up to reparametrization.
\end{prop}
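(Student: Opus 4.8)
The plan is to establish the a priori estimates that drive both long-time existence and sub-convergence, following the strategy of Dziuk--Kuwert--Sch\"atzle and Polden. The central quantity to control is the elastic energy $\sE(\ga(t,\cdot))$, which is non-increasing along the flow: a direct computation (to be carried out in the next section, where the first variation of $\sE$ is derived) shows that
\begin{equation}\label{eq:EnergyDissipation}
\frac{d}{dt}\sE(\ga(t,\cdot)) = -\int_{\SSS^1} \left| -\na^\perp\na^\perp k - \tfrac12|k|^2 k + k \right|^2 ds \le 0.
\end{equation}
Integrating in time yields the crucial fact that the full gradient is square-integrable over $[0,+\infty)$, and since $\sE$ is bounded below by the length term, the total dissipation is finite.

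Next I would set up the interior higher-order estimates. The key is to prove that, along the flow, all the scale-invariant (or suitably normalized) curvature quantities $\int_{\SSS^1} |\na^{\perp\, j} k|^2\,ds$ remain uniformly bounded in time, for every $j\in\N$. This is done by the standard interpolation machinery for geometric evolution equations: one derives evolution equations for these integral quantities, bounds the right-hand side using Gagliardo--Nirenberg--type interpolation inequalities on curves together with the uniform energy bound from \eqref{eq:EnergyDissipation}, and closes the estimates by induction on $j$. The length of the evolving curve must be shown to stay bounded above and below, so that the geometry does not degenerate; the upper bound on length follows immediately from the energy monotonicity, while preventing the length from collapsing requires combining the curvature bounds with the absence of finite-time singularities. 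These estimates give uniform $C^m$ bounds on the curvature and its arclength derivatives for all times.

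With uniform bounds in hand, the extraction of a convergent subsequence is a compactness argument. Because $\sE$ is translation-invariant, I would first subtract a point $p_j$ (for instance the position of a fixed parameter value, or the barycenter) to fix the curves in space, and reparametrize by a fixed multiple of arclength so that the parametrizations do not degenerate. The uniform $C^m$ bounds, together with a lower bound on $|\ga'|$ coming from the length estimate, then allow an application of Arzel\`a--Ascoli in every $C^m(\SSS^1,\R^n)$; a diagonal argument over $m$ produces a sequence of times $t_j\nearrow+\infty$ and a smooth limit curve $\ga_\infty$ with $\ga(t_j,\cdot)-p_j\to\ga_\infty$ smoothly up to reparametrization. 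Finally, the finiteness of the total dissipation in \eqref{eq:EnergyDissipation} forces the $L^2$-gradient to tend to zero along a subsequence, whence the limit $\ga_\infty$ is a critical point of $\sE$, i.e.\ it satisfies $-\na^\perp\na^\perp k - \tfrac12|k|^2 k + k = 0$.

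I expect the main obstacle to be the global-in-time uniform control of the higher-order curvature norms and, in tandem, the non-degeneracy of the geometry --- namely ruling out that the length tends to zero or that the parametrization becomes singular in the limit. This is precisely where the geometric, parametrization-independent nature of the problem must be handled carefully: the estimates are naturally geometric, but to invoke Arzel\`a--Ascoli one needs a concrete choice of parametrization and translation normalization, and one must verify that the normalized curves remain in a compact set of immersions. The passage from the geometric bounds to convergence \emph{up to reparametrization} is the delicate bookkeeping step, and it is the reason the statement is phrased modulo translations $p_j$ and reparametrization.
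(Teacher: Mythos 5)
The paper offers no proof of this proposition: it is quoted from Polden's thesis and Dziuk--Kuwert--Sch\"atzle, and the body of the paper only uses its consequences (the uniform curvature bounds~\eqref{eq:StimaParabolica} and the two-sided length bounds). So there is nothing internal to compare against; judged against the strategy of the cited references, your sketch is faithful: energy dissipation, evolution equations for $\int_{\SSS^1}|(\na^\perp)^{j}k|^2\,ds$ closed by Gagliardo--Nirenberg interpolation and induction, compactness modulo translations $p_j$ and reparametrization, and finiteness of the dissipation integral to force the $L^2$-gradient to vanish along a sequence of times, identifying the limit as a critical point. That is exactly the architecture of the original proofs.

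Two steps of your plan, however, are not right as stated. First, the lower bound on the length: you propose to get it by ``combining the curvature bounds with the absence of finite-time singularities,'' which is close to circular, since ruling out degeneration is precisely what the length and curvature control are needed for. The actual argument is elementary and flow-independent: Fenchel's theorem gives $\int_{\SSS^1}|k|\,ds\ge 2\pi$ for any closed curve, and by Cauchy--Schwarz $2\pi\le L^{1/2}\bigl(\int_{\SSS^1}|k|^2\,ds\bigr)^{1/2}\le L^{1/2}\bigl(2\sE(\ga_0)\bigr)^{1/2}$, hence $L(\ga_t)\ge 2\pi^2/\sE(\ga_0)$ uniformly in time, using only the energy monotonicity. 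Second, the proposition also asserts existence and \emph{uniqueness} of the smooth solution, and your plan never addresses short-time existence. This is not routine here: equation~\eqref{eq:DefFlusso} prescribes only the normal velocity, so the system is degenerate parabolic due to its invariance under reparametrization; one must fix a gauge (for instance, write the evolving curves as normal deformations of a fixed reference curve, or add a suitable tangential velocity) to obtain a strictly parabolic equation to which standard theory applies, and then transfer existence and geometric uniqueness back to the original problem. Without this step, the global curvature estimates you invoke have no solution to be estimates of; both cited references devote substantial work to it.
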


Our aim is to show that actually all the flow converges to a critical point $\ga_\infty$, as $t\to+\infty$.

\begin{thm}\label{thm:FullConvergence}
Let  $\ga:[0,+\infty)\times \SSS^1 \to \R^n$ be a smooth solution of the elastic flow, then 
there exists a smooth critical point $\ga_\infty$ of $\sE$ such that
\[
\ga(t,\cdot) \xrightarrow[t\to+\infty]{}\ga_\infty
\]
in $C^m(\SSS^1,\R^n)$ for any $m\in \N$, up to reparametrization. In particular, there exists a compact set $K\con\R^n$ such that $\ga(t,\SSS^1)\con K$ for any time $t\ge0$.
\end{thm}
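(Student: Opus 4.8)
The plan is to upgrade the subconvergence of Proposition~\ref{thm:SubConvergence} to full convergence by means of a \L ojasiewicz--Simon gradient inequality for the functional $\sE$, following the by-now classical strategy introduced by L.~Simon. The key analytic ingredient is an inequality of the form $|\sE(\ga)-\sE(\ga_\infty)|^{1-\te}\le C\,\|\na_{L^2}\sE(\ga)\|_{L^2}$, valid for every regular curve $\ga$ sufficiently close (in a suitable H\"older or Sobolev norm) to the critical point $\ga_\infty$ produced by the subconvergence, with constants $C>0$ and $\te\in(0,1/2]$ depending only on $\ga_\infty$. Here the $L^2$--gradient of $\sE$ is exactly the right-hand side of the flow equation~\eqref{eq:DefFlusso}, namely $\na_{L^2}\sE(\ga)=-\na^\perp\na^\perp k-\tfrac12|k|^2k+k$, so that along the flow $\tfrac{d}{dt}\sE(\ga(t,\cdot))=-\|\pa_t\ga\|_{L^2}^2$.

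Granting such an inequality, I would proceed as follows. First, since $\sE(\ga(t,\cdot))$ is nonincreasing and bounded below, it converges to some limit value $\sE_\infty$ as $t\to+\infty$, and by the subconvergence this limit equals $\sE(\ga_\infty)$. The heart of the argument is then to show that the length of the flow trajectory is finite, i.e.\ that $\int_0^{+\infty}\|\pa_t\ga(t,\cdot)\|_{L^2}\,dt<+\infty$. One sets $H(t)=(\sE(\ga(t,\cdot))-\sE(\ga_\infty))^\te$ and computes, using the chain rule, the monotonicity of the energy and the \L ojasiewicz--Simon inequality, that $-\tfrac{d}{dt}H(t)=\te\,(\sE-\sE(\ga_\infty))^{\te-1}\|\pa_t\ga\|_{L^2}^2\ge c\,\|\pa_t\ga\|_{L^2}$ on any time interval where $\ga(t,\cdot)$ stays in the neighbourhood of $\ga_\infty$ where the inequality holds. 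Integrating yields finiteness of the trajectory length on such intervals, and a standard continuity/continuation argument shows that once $\ga(t_j,\cdot)$ is close enough to $\ga_\infty$ for some large $t_j$, the flow can never leave a slightly larger neighbourhood, so the estimate is in fact global in time from $t_j$ on. Finite length in $L^2$ then forces $\ga(t,\cdot)$ to converge to a single limit $\ga_\infty$ in $L^2$ as $t\to+\infty$ (the full sequence, not just a subsequence), and interpolation together with the uniform smooth bounds coming from the subconvergence result upgrades this to convergence in every $C^m$.

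There are two genuine technical obstacles. The first, and principal one, is establishing the \L ojasiewicz--Simon inequality itself: since $\sE$ is a geometric functional invariant under reparametrization and translation, its $L^2$--gradient has a nontrivial kernel (tangential reparametrizations, translations, and the symmetries of the critical point), so $\sE$ is never truly a Morse function and the abstract \L ojasiewicz--Simon theorem must be applied with care. The standard remedy is to work modulo the reparametrization group by fixing a suitable parametrization (e.g.\ writing nearby curves as normal graphs over $\ga_\infty$), to verify that the second variation of $\sE$ at $\ga_\infty$ is an analytic, Fredholm, self-adjoint operator of the required form on the relevant Sobolev spaces, and to check the analyticity of the map $\ga\mapsto\na_{L^2}\sE(\ga)$ between appropriate Banach spaces; this is precisely the step the authors advertise in the abstract as wishing to present ``in a quite concise and versatile way.'' The second obstacle is bookkeeping the reparametrization and translation freedom throughout the convergence argument: the subconvergence is only up to reparametrization and translation by points $p_j$, so one must show these ambiguities can be fixed consistently (for instance by reparametrizing the flow to track $\ga_\infty$ as a normal graph and controlling the induced tangential motion) so that the $L^2$--distance being integrated is the honest distance to a fixed limit curve. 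Once these two points are settled, the remainder of the proof is the routine Simon-type integration outlined above.
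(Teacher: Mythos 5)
Your proposal follows essentially the same route as the paper: a \L ojasiewicz--Simon inequality for $\sE$ established by writing nearby curves as normal graphs over $\ga_\infty$ and verifying analyticity plus Fredholmness (of index zero) of the second variation, then the Simon-type argument with $H(t)=(\sE(\ga_t)-\sE(\ga_\infty))^\al$, the $L^1$--in--time bound on $\|\pa_t\ga\|_{L^2}$, a continuation argument using interpolation against the uniform parabolic estimates to keep the flow in the \L ojasiewicz neighbourhood, and finally upgrading the resulting $L^2$--Cauchy property to $C^m$ convergence. Both technical obstacles you single out (the geometric degeneracy handled via the normal-graph gauge, and the bookkeeping of the translation/reparametrization ambiguity) are exactly the points the paper addresses, and in the same way.
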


\begin{remark}
We underline that, even if Theorem~\ref{thm:FullConvergence} implies that the solution of the elastic flow in $\R^n$ stays in a compact region, it does not tell anything about its shape. We believe it is a nice open question to quantify the size of such compact set, depending on the given initial datum $\ga_0$. We also mention that a related problem proposed by G. Huisken is to determine whether the flow starting from a curve in the upper halfplane of $\R^2$, at some time is instead completely contained in the lower halfplane.
\end{remark}

\begin{remark}
We observe that the conclusion of Theorem~\ref{thm:FullConvergence} can be extended to the flow by the gradient of the ``modified'' functional $\sE_\la(\ga)\int_{\SSS^1} \la + \frac12|k|^2\,ds$, for every $\lambda>0$. Moreover, we remark that the same result holds also for the elastic flow of curves in the $2$--dimensional hyperbolic space or in the $2$--dimensional sphere $\SSS^2$. More generally, we expect that it is possible to prove that in a complete, {\em homogeneous} Riemannian manifold $(M^n,g)$ (that is, the group of isometries acts transitively on the manifold), the sub--convergence of the elastic flow can be improved to the full convergence. For a proof of these results and further comments we refer to~\cite{Po20Loja}. We remark that the hypothesis of $(M^n,g)$ being an analytic manifold (with analytic metric $g$) of bounded geometry is not sufficient, see~\cite[Appendix B]{Po20Loja}. 
\end{remark}

\section{The elastic functional}\label{sec:Rn}

We first notice that the elastic functional $\sE$ can be defined on every regular closed curve in $H^4(\SSS^1,\R^n)$, since, by Sobolev embedding theorem, such a curve belongs to $C^3(\SSS^1,\R^n)$, hence its unit tangent and curvature vector fields are well defined and continuous.

Assume that $\gamma:\SSS^1\to\R^n$ is a smooth regular closed curve in $\R^n$ and $\vp\in H^4(\SSS^1,\R^n)$. If $|\varepsilon|$ is small enough, then $\gamma_\varepsilon=\gamma+\varepsilon X\in H^4(\SSS^1,\R^n)$ is still a regular curve, being $\gamma_\varepsilon\in C^3(\SSS^1,\R^n)$ and $C^3$--converging to $\gamma$ as $\varepsilon\to 0$, again by Sobolev embedding theorem. Then, denoting with $\tau_\ep$ and $k_\ep$ its unit tangent and curvature vector fields, respectively and letting $ds_\varepsilon$ to be the arclength measure associated to $\gamma_\varepsilon$, we want to compute the first and second derivatives in $\varepsilon$ of the function 
$$
\varepsilon\mapsto\sE(\gamma_\varepsilon)=\sE(\gamma+\varepsilon X )= \int_{\SSS^1} (1+|k_\varepsilon|^2/2 )\,ds_\varepsilon,
$$
in order to get the first and second variations of $\sE$ at $\gamma$, with the field $ X $ as infinitesimal generator of the ``deformation'' of $\gamma$. 

We will denote with $\partial_\varepsilon$ the partial derivative in $\varepsilon$, which clearly commutes with $\partial_\theta$ but not with $\partial_s$ or $\nabla^\perp$ (see below).

In the next computations, we will need the following straightforward integration by parts formula,
\begin{equation}\label{normalIP}
\int_{\SSS^1}\left\langle \na^\perp\vp ,  Y  \right\rangle\,ds=
-\int_{\SSS^1}\left\langle\vp , \na^\perp  Y  \right\rangle\,ds,
\end{equation}
holding for every couple of normal vector fields $\vp, Y  \in H^1(\SSS^1,\R^n)$ along $\gamma$, coming from the standard formula
$$
\int_{\SSS^1}\left\langle \partial_s\vp ,  Y  \right\rangle\,ds=
-\int_{\SSS^1}\left\langle\vp , \partial_s Y  \right\rangle\,ds,
$$
for every couple of general vector fields $\vp, Y  \in H^1(\SSS^1,\R^n)$.\\
Moreover, if $\vp:\SSS^1\to\R^n$ is a vector field along $\gamma$, we will denote with $\vp^\top$ and $\vp^\perp$, respectively the projection on the tangent or normal space of $\ga$, that is, 
$$
\vp^\top (\theta) = \scal{\vp(\theta),\tau(\theta)}\tau(\theta)\qquad\text{ and }\qquad \vp^\perp(\theta) = \vp(\theta) - \vp^\top(\theta).
$$

It is easy to compute the variation of the arclength measure $ds_\varepsilon$ associated to $\gamma_\varepsilon$,
\[
\begin{split}
\partial_\varepsilon ds_\varepsilon&=\partial_\varepsilon |\partial_\theta\gamma_\varepsilon|\,d\theta=\frac{\langle\partial_\varepsilon\partial_\theta\gamma_\varepsilon,\partial_\theta\gamma_\varepsilon\rangle}{|\partial_\theta\gamma_\varepsilon|}\,d\theta=
\frac{\langle\partial_\theta\partial_\varepsilon\gamma_\varepsilon,\tau_\varepsilon\rangle}{|\partial_\theta\gamma_\varepsilon|}\,ds_\varepsilon=\langle\partial_s X ,\tau_\varepsilon\rangle\,ds_\varepsilon\\
&=\bigl[\partial_s\langle X ,\tau_\varepsilon\rangle-
\langle X ,k_\varepsilon\rangle\bigr]\,ds_\varepsilon
\end{split}
\]
as $\partial_\varepsilon\gamma_\varepsilon= X $. In order to proceed, we need the following ``commutation'' formula:
$$
\partial_\varepsilon \partial_s f=\partial_\varepsilon \frac{\partial_\theta f}{|\gamma_\varepsilon'|}
=\frac{1}{|\gamma_\varepsilon'|}\partial_\varepsilon \partial_\theta f
-\Bigl\langle\frac{\partial_\theta\gamma_\varepsilon}{|\partial_\theta\gamma_\varepsilon|^3},\partial_\varepsilon \partial_\theta\gamma_\varepsilon\Bigr\rangle\,\partial_\theta f
=\partial_s\partial_\varepsilon f-\bigl\langle\tau_\varepsilon,\partial_s X \bigr\rangle\,\partial_s f,
$$
for every function $f:\SSS^1\to\R$. Hence, we can write
\begin{equation}\label{commut1}
\partial_\varepsilon \partial_s=\partial_s\partial_\varepsilon -\langle\tau_\varepsilon,\partial_s X \rangle\,\partial_s
=\partial_s\partial_\varepsilon -\partial_s\langle\tau_\varepsilon, X \rangle\,\partial_s+
\langle k_\varepsilon, X \rangle\,\partial_s.
\end{equation}
Then, we compute
\begin{align}
\partial_\varepsilon \tau_\varepsilon=&\,\partial_\varepsilon \partial_s\gamma_\varepsilon\nonumber\\
=&\,\partial_s\partial_\varepsilon\gamma_\varepsilon -\langle\tau_\varepsilon,\partial_s X \rangle\,\partial_s\gamma_\varepsilon\nonumber\\
=&\,\partial_s X  -\langle\tau_\varepsilon,\partial_s X \rangle\,\tau_\varepsilon\nonumber\\
=&\,[\partial_s X ]^\perp\nonumber\\
=&\,[\partial_s(\langle\tau_\varepsilon, X \rangle\tau_\varepsilon+ X^\perp)]^\perp\nonumber\\
=&\,\nabla^\perp X^\perp+\langle\tau_\varepsilon, X \rangle k_\varepsilon\label{taueq}
\end{align}
and
\begin{align}
\partial_\varepsilon k_\varepsilon=&\,\partial_\varepsilon \partial_s\tau_\varepsilon\nonumber\\
=&\,\partial_s\partial_\varepsilon\tau_\varepsilon -\partial_s\langle\tau_\varepsilon, X \rangle\,\partial_s\tau_\varepsilon+\langle k_\varepsilon, X \rangle\,\partial_s\tau_\varepsilon\nonumber\\
=&\,\partial_s[\nabla^\perp X^\perp+\langle\tau_\varepsilon, X \rangle k_\varepsilon]
-\partial_s\langle\tau_\varepsilon, X \rangle\,k_\varepsilon+\langle k_\varepsilon, X \rangle\,k_\varepsilon\nonumber\\
=&\,\nabla^\perp\nabla^\perp X^\perp+\langle\partial_s\nabla^\perp X^\perp,\tau_\varepsilon\rangle\tau_\varepsilon
+\langle\tau_\varepsilon, X \rangle \partial_s k_\varepsilon+\langle k_\varepsilon, X \rangle\,k_\varepsilon\nonumber\\
=&\,\nabla^\perp\nabla^\perp X^\perp-\langle\nabla^\perp X^\perp,k_\varepsilon\rangle\tau_\varepsilon
+\langle\tau_\varepsilon, X \rangle \partial_s k_\varepsilon+\langle k_\varepsilon, X \rangle\,k_\varepsilon,\label{keq}
\end{align}
where we canceled the scalar products between orthogonal vectors. We then also get
\begin{align*}
\partial_\varepsilon|k_\varepsilon|^2
=&\,2\langle k_\varepsilon,\nabla^\perp\nabla^\perp X^\perp\rangle
+2\langle\tau_\varepsilon, X \rangle\,\langle k_\varepsilon,\partial_s k_\varepsilon\rangle+2\langle k_\varepsilon, X \rangle\,|k_\varepsilon|^2,
\end{align*}
which implies the first variation formula
\begin{align}
\de \sE_{\gamma_\varepsilon}( X )\coloneqq &\,\frac{d\,}{d\varepsilon}\sE(\gamma_\varepsilon)\nonumber\\
=&\,\int_{\SSS^1}\Bigl[\langle k_\varepsilon,\nabla^\perp\nabla^\perp X^\perp\rangle
+\langle\tau_\varepsilon, X \rangle\,\langle k_\varepsilon,\partial_s k_\varepsilon\rangle+\langle k_\varepsilon, X \rangle\,|k_\varepsilon|^2\nonumber\\
&\,\phantom{\int_{\SSS^1}\Bigl[}+(1+|k_\varepsilon|^2/2)\bigl[\partial_s\langle\tau_\varepsilon, X \rangle-
\langle k_\varepsilon, X \rangle\bigr]\,\Bigr]\,ds_\varepsilon\nonumber\\
=&\,\int_{\SSS^1}\Bigl[\langle \nabla^\perp\nabla^\perp k_\varepsilon, X^\perp\rangle
+\langle\tau_\varepsilon, X \rangle\,\langle k_\varepsilon,\partial_s k_\varepsilon\rangle+\langle k_\varepsilon, X \rangle\,|k_\varepsilon|^2/2\nonumber\\
&\,\phantom{\int_{\SSS^1}\Bigl[}+\partial_s\langle\tau_\varepsilon, X \rangle|k_\varepsilon|^2/2-\langle k_\varepsilon, X \rangle\,\Bigr]\,ds_\varepsilon\nonumber\\
=&\,\int_{\SSS^1}\Bigl[\langle\nabla^\perp\nabla^\perp k_\varepsilon, X \rangle
+\langle k_\varepsilon, X \rangle\,|k_\varepsilon|^2/2-\langle k_\varepsilon, X \rangle\,\Bigr]\,ds_\varepsilon\nonumber\\
=&\,\int_{\SSS^1}\bigl\langle \na^\perp\na^\perp k_\varepsilon + |k_\varepsilon|^2k_\varepsilon/2 - k_\varepsilon,  X\bigr\rangle\,ds_\varepsilon,\label{eq:RefFirstVar}
\end{align}
where we integrated by parts in the second and third step.

In particular, for any smooth regular curve $\gamma:\SSS^1\to\R^n$, the $L^2(ds)$--gradient of the functional $\sE$, giving rise to the definition of the elastic flow ~\eqref{eq:DefFlusso}, is given by
$$
\na^\perp\na^\perp k + |k|^2k/2 - k
$$
simply by evaluating at $\ep=0$. We notice that the first variation of $\sE$ at $\ga$ only depends on the normal part $ X^\perp$ of the vector field $ X $ along $\ga$, being $\na^\perp\na^\perp k + |k|^2k/2 - k$ a normal vector field along $\gamma$. This well known fact is due to the ``geometric nature'' of the functional $\sE$, in particular to its invariance by reparametrization of the curves.

\begin{remark}
The above computation is also justified if $\gamma$ is a regular curve in $H^4(\SSS^1,\R^n)$ and we are considering the first variation $\de\sE_\gamma$ as an element of $H^4(\SSS^1,\R^n)^\star$, defined by $\de\sE_\gamma(X)= \tfrac{d}{d\varepsilon}\sE(\ga+\varepsilon X)\bigr\vert_{\varepsilon=0}$. Indeed, $\de\sE_\gamma \in L^2(\SSS^1,\R^n)^\star$ and it is represented by the normal vector field $|\ga'|\bigl(\na^\perp\na^\perp k + |k|^2k/2 - k\bigr)$ along $\gamma$, with respect to the $L^2(d\theta)$--scalar product (and by $\na^\perp\na^\perp k + |k|^2k/2 - k$ with respect to the $L^2(ds)$--scalar product).

A critical point of $\sE$ is a regular curve $\ga:\SSS^1\to\R^n$ of class $H^4$ such that $\delta\sE_\ga=0$, that is, $\na^\perp\na^\perp k + |k|^2k/2 - k = 0$. Standard regularity arguments imply that such a critical point is actually of class $C^\infty$ (see for example the proof of~\cite[Proposition 4.1]{DaPl17}). In particular, an elastic flow~\eqref{eq:DefFlusso} starting from a critical point simply does not move.
\end{remark}

Before dealing with the second variation of $\sE$, we work out another commutation formula:
\begin{align*}
\partial_\varepsilon \nabla^\perp Y 
=&\,\partial_\varepsilon[\partial_s Y -\langle \partial_s Y ,\tau_\varepsilon\rangle\tau_\varepsilon]\\
=&\,\partial_\varepsilon\partial_s Y -\langle \partial_\varepsilon\partial_s Y ,\tau_\varepsilon\rangle\tau_\varepsilon-\langle\partial_s Y ,\partial_\varepsilon\tau_\varepsilon\rangle\tau_\varepsilon-\langle \partial_s Y ,\tau_\varepsilon\rangle \partial_\varepsilon\tau_\varepsilon\\
=&\,[\partial_\varepsilon\partial_s Y ]^\perp-\langle\partial_s Y ,\partial_\varepsilon\tau_\varepsilon\rangle\tau_\varepsilon-\langle \partial_s Y ,\tau_\varepsilon\rangle \partial_\varepsilon\tau_\varepsilon\\
=&\,[\partial_s\partial_\varepsilon Y  -\langle\tau_\varepsilon,\partial_s X \rangle\,\partial_s Y ]^\perp-\langle\partial_s Y ,\partial_\varepsilon\tau_\varepsilon\rangle\tau_\varepsilon-\langle \partial_s Y ,\tau_\varepsilon\rangle \partial_\varepsilon\tau_\varepsilon\\
=&\,\nabla^\perp\partial_\varepsilon Y  -\langle\tau_\varepsilon,\partial_s X \rangle\,\nabla^\perp Y -\langle\partial_s Y ,\nabla^\perp X^\perp+\langle\tau_\varepsilon, X \rangle k_\varepsilon\rangle\tau_\varepsilon\nonumber\\
&\,-\langle \partial_s Y ,\tau_\varepsilon\rangle(\nabla^\perp X^\perp+\langle\tau_\varepsilon, X \rangle k_\varepsilon)\\
=&\,\nabla^\perp\partial_\varepsilon Y 
-\langle\tau_\varepsilon,\partial_s X \rangle\,\nabla^\perp Y 
-\langle\nabla^\perp Y ,\nabla^\perp X^\perp\rangle\tau_\varepsilon
-\langle\tau_\varepsilon, X \rangle\langle\nabla^\perp Y ,k_\varepsilon\rangle\tau_\varepsilon\\
&\,-\langle \partial_s Y ,\tau_\varepsilon\rangle\nabla^\perp X^\perp
-\langle \partial_s Y ,\tau_\varepsilon\rangle\langle\tau_\varepsilon, X \rangle k_\varepsilon
\end{align*}
where we used commutation formula~\eqref{commut1} and~\eqref{taueq}.

In particular, if $Y= Y(\ep)$ is a normal vector field along $\ga_\ep$ for any $\ep$, carrying in the last line the $\partial_s$ derivative out of the scalar products, we get
\begin{align*}
\partial_\varepsilon \nabla^\perp Y 
=&\,\nabla^\perp\partial_\varepsilon Y 
-\langle\tau_\varepsilon,\partial_s X \rangle\,\nabla^\perp Y 
-\langle\nabla^\perp Y ,\nabla^\perp X^\perp\rangle\tau_\varepsilon
-\langle\tau_\varepsilon, X \rangle\langle\nabla^\perp Y ,k_\varepsilon\rangle\tau_\varepsilon\\
&\,+\langle  Y ,k_\varepsilon\rangle\nabla^\perp X^\perp
+\langle  Y ,k_\varepsilon\rangle\langle\tau_\varepsilon, X \rangle k_\varepsilon
\end{align*}
and if also $ X $ is normal along $\ga$, working analogously we conclude
\begin{equation}\label{commut2bisbis}
\partial_\varepsilon \nabla^\perp Y 
=\nabla^\perp\partial_\varepsilon Y 
+\langle X , k_\varepsilon\rangle\,\nabla^\perp Y 
-\langle\nabla^\perp Y ,\nabla^\perp X \rangle\tau_\varepsilon
+\langle  Y ,k_\varepsilon\rangle\nabla^\perp X
\end{equation}
at $\ep=0$.

By means of the above conclusion, we can write
\begin{equation}\label{secvar}
\de^2 \sE_\gamma(X,X)\coloneqq
  \frac{d^2\,}{d\varepsilon^2}\sE(\gamma + \varepsilon X)\,\Bigl\vert_{\varepsilon=0}
= \frac{d\,}{d\varepsilon}\bigl\langle \na^\perp\na^\perp k_\varepsilon
+ |k_\varepsilon|^2k_\varepsilon/2 - k_\varepsilon,  X  \bigr\rangle_{L^2(ds_\varepsilon)}\,\Bigl\vert_{\varepsilon=0},
\end{equation}
that is,
\begin{align*}
\de^2 \sE_\gamma(X,X)=&\,\int_{\SSS^1}\bigl\langle\partial_\varepsilon\bigl( \na^\perp\na^\perp k_\varepsilon
+ |k_\varepsilon|^2k_\varepsilon/2 - k_\varepsilon\bigr)\,\bigr\vert_{\varepsilon=0},  X\bigr\rangle\,ds\\
&\,+\int_{\SSS^1}\bigl\langle \na^\perp\na^\perp k + |k|^2k/2 - k,  X\bigr\rangle\,\bigl[\partial_s\langle X ,\tau\rangle-\langle X ,k\rangle\bigr]\,ds.
\end{align*}
Since this is the case we are interested in, we assume that $\gamma$ is a critical point of $\sE$ (that is, $\delta\sE_\ga=0$) and $X$ is a normal vector field along $\gamma$, hence
\begin{equation}\label{2der}
\de^2 \sE_\gamma(X,X)=\int_{\SSS^1}\bigl\langle\partial_\varepsilon\bigl( \na^\perp\na^\perp k_\varepsilon
+ |k_\varepsilon|^2k_\varepsilon/2 - k_\varepsilon\bigr)\,\bigr\vert_{\varepsilon=0},  X\bigr\rangle\,ds
\end{equation}
being the second line above equal to zero, as $ \na^\perp\na^\perp k + |k|^2k/2 - k=0$.\\
Assuming that $ X $ is a normal vector field along $\gamma$, by means of equations~\eqref{commut2bisbis} and~\eqref{keq}, we have
\begin{align*}
\partial_\varepsilon\na^\perp\na^\perp k_\varepsilon\,\bigr\vert_{\varepsilon=0}
=&\,\nabla^\perp\partial_\varepsilon\na^\perp k_\varepsilon\,\bigr\vert_{\varepsilon=0}
+\langle k, X \rangle\,\nabla^\perp\na^\perp k
-\langle\nabla^\perp\na^\perp k,\nabla^\perp X \rangle\tau
+\langle \na^\perp k,k\rangle\nabla^\perp X \\
=&\,\nabla^\perp\bigl[
\nabla^\perp\partial_\varepsilon k_\varepsilon\,\bigr\vert_{\varepsilon=0}
+\langle X , k\rangle\,\nabla^\perp k
-\langle\nabla^\perp k,\nabla^\perp X \rangle\tau
+\langle  k,k\rangle\nabla^\perp X ]\\
&\,+\langle k, X \rangle\,\nabla^\perp\na^\perp k
-\langle\nabla^\perp\na^\perp k,\nabla^\perp X \rangle\tau
+\langle \na^\perp k,k\rangle\nabla^\perp X \\
=&\,\nabla^\perp\nabla^\perp\bigl[
\nabla^\perp\nabla^\perp X -\langle\nabla^\perp X ,k\rangle\tau
+\langle k, X \rangle\,k\bigr]\\
&\,+\nabla^\perp\bigl[\langle X , k\rangle\,\nabla^\perp k
-\langle\nabla^\perp k,\nabla^\perp X \rangle\tau
+|k|^2\nabla^\perp X ]\\
&\,+\langle k, X \rangle\,\nabla^\perp\na^\perp k
-\langle\nabla^\perp\na^\perp k,\nabla^\perp X \rangle\tau
+\langle \na^\perp k,k\rangle\nabla^\perp X .
\end{align*}
Hence, dropping the scalar products which are zero by orthogonality, we get
$$
\int_{\SSS^1}\bigl\langle\partial_\varepsilon\bigl( \na^\perp\na^\perp k_\varepsilon\,\bigr\vert_{\varepsilon=0},  X \bigr\rangle\,ds
=\int_{\SSS^1}\bigl\langle \na^\perp\na^\perp \nabla^\perp\nabla^\perp X +\Lambda(X),X\bigr\rangle\,ds
$$
where $\Lambda(X)\in L^2(ds)$ is a normal vector field along $\gamma$, depending only on $k,X$ and their ``normal derivatives'' $\na^\perp$ up to the third order, moreover the dependence on $X$ is linear.\\
The computation of the remaining term in equation~\eqref{2der},
$$
\partial_\varepsilon\bigl(|k_\varepsilon|^2k_\varepsilon/2 - k_\varepsilon\bigr)\,\bigr\vert_{\varepsilon=0}
$$
is easier and follows immediately by equation~\eqref{keq}, giving rise to another term similar to $\Lambda(X)$, linear in $X$ and containing only ``normal derivatives'' $\na^\perp$ of $k$ and $X$ up to the second order.\\
Hence, we conclude
\begin{equation*}
\de^2 \sE_\gamma(X,X)=\int_{\SSS^1}\bigl\langle\na^\perp\nabla^\perp \na^\perp\nabla^\perp\vp +\Omega( X), X\bigr\rangle\,ds
\end{equation*}
where $\Om(X)\in L^2(ds)$ is a normal vector field along $\gamma$, linear in $X$ and depending only on $k,X$ and their ``normal derivatives'' $\na^\perp$ up to the order three.\\
By polarization, we get the symmetric bilinear form on the space of the normal vector fields along $\gamma$ in $H^4(\SSS^1,\R^n)$, giving the second variation of the functional $\sE$ at $\gamma$:
$$
\delta^2\sE_\gamma(X,Y)=\int_{\SSS^1}\bigl\langle \na^\perp\nabla^\perp \na^\perp\nabla^\perp\vp+\Omega(X), Y\bigr\rangle\,ds=\bigl\langle\cL(\vp), Y\bigr\rangle_{L^2(\SSS^1,\R^n)},
$$
where we set
$$
\cL(\vp)\coloneqq |\ga'|\left((\na^\perp)^4 \vp  + \Om(\vp)\right).
$$

\begin{remark}\label{secvar2} 
We observe that $\cL$ and $\Omega$ are linear and continuous maps defined on the space of normal vector fields along $\gamma$ in $H^4(\SSS^1,\R^n)$ and taking values in the normal vector fields along $\gamma$ in $L^2(\SSS^1,\R^n)$, moreover $\Om$ is a compact operator, by Sobolev embeddings. Therefore, for any normal vector field $X$ in $H^4(\SSS^1,\R^n)$, we have that $\de^2\sE_\ga(X,\cdot)$ can be seen as an element of the dual of the space of the normal vector fields along $\gamma$ in $L^2(\SSS^1,\R^n)$.
\end{remark}

\begin{remark}
We refer to~\cite[Section 3.1]{Po20Loja} for the explicit full computation of the first and second variations of $\sE$ in the general case of curves on manifolds, even without assuming that $\gamma$ is a critical point of $\sE$ and that $X,Y$ are normal vector fields. For our purpose here, the previous computations are sufficient.
\end{remark}

\begin{defn}
Let $\ga:\SSS^1\to\R^n$ be a regular smooth closed curve in $\R^n$ and $\tau$ its unit tangent vector field. For $m \in \N$ we define the Sobolev spaces of normal vector fields along $\gamma$ as
$$
H^{m,\perp}_\ga  = \left\{  \vp\in W^{m,2}(\SSS^1,\R^n)\,:\,\text {$\scal{\tau(\theta),\vp(\theta)}=0$ for almost every $\theta\in\SSS^1$}\right\},
$$
where as usual $W^{0,2}(\SSS^1,\R^n)=L^2(\SSS^1,\R^n)$. Moreover, we denote with $L^{2,\perp}_\ga =H^{0,\perp}_\ga $ the normal vector fields along $\gamma$, belonging to $L^2(\SSS^1,\R^n)$.\\
We underline that, unless otherwise stated, the spaces $L^p$ are endowed with the Lebesgue measure $d\theta$. In case it is convenient to consider another measure, like the arclength measure $ds$ of a curve, we will specify $L^p(ds)$. Observe that if $\ga:\SSS^1\to\R^n$ is smooth and regular, then clearly $L^p(d\theta)=L^p(ds)$, for any $p\in[1,+\infty)$.
\end{defn}

We conclude this section by showing that the second variation operator $\de^2\sE_\ga$ is Fredholm of index zero. We recall that by Remark~\ref{secvar2}, using this definition, we can consider $\delta^2\sE_\gamma : H^{4,\perp}_\ga \to (L^{2,\perp}_\ga)^\star$. 

\begin{lemma}\label{lem:FredholmRn}
Let $\ga:\SSS^1\to\R^n$ be a smooth regular curve. The operator $(\na^\perp)^4:H^{4,\perp}_\ga \to L^{2,\perp}_\ga $ is Fredholm of index zero, and then same holds for the operators $\cL:H^{4,\perp}_\ga \to L^{2,\perp}_\ga $ and $\delta^2\sE_\gamma : H^{4,\perp}_\ga \to (L^{2,\perp}_\ga)^\star$. 
\end{lemma}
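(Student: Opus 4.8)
The plan is to reduce the whole statement to the single fact that the leading operator $(\na^\perp)^4$ is Fredholm of index zero, and to obtain this by exhibiting it as a compact perturbation of an isomorphism. First I would observe that, since $\na^\perp$ always produces a normal field, the operator $(\na^\perp)^4+\id$ maps $H^{4,\perp}_\ga$ boundedly into $L^{2,\perp}_\ga$, and I claim it is an isomorphism. Injectivity is immediate: integrating by parts twice with formula~\eqref{normalIP} gives, for $X\in H^{4,\perp}_\ga$,
\[
\int_{\SSS^1}\bigl\langle((\na^\perp)^4+\id)X,X\bigr\rangle\,ds=\|(\na^\perp)^2X\|_{L^2(ds)}^2+\|X\|_{L^2(ds)}^2,
\]
so $((\na^\perp)^4+\id)X=0$ forces $X=0$.

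For surjectivity I would solve $((\na^\perp)^4+\id)X=f$ weakly on the energy space $H^{2,\perp}_\ga$ via the Lax--Milgram theorem, applied to the bounded symmetric bilinear form $B(X,Y)=\int_{\SSS^1}\langle(\na^\perp)^2X,(\na^\perp)^2Y\rangle\,ds+\int_{\SSS^1}\langle X,Y\rangle\,ds$. The essential point is coercivity of $B$ on $H^{2,\perp}_\ga$. Here I would interpolate, using $\|\na^\perp X\|_{L^2}^2=-\int\langle(\na^\perp)^2X,X\rangle\,ds\le\tfrac12\|(\na^\perp)^2X\|_{L^2}^2+\tfrac12\|X\|_{L^2}^2$, together with the identity $\partial_sX=\na^\perp X-\langle X,k\rangle\tau$ valid for normal $X$; iterating the latter expresses each $\partial_s^jX$ through the $(\na^\perp)^iX$ with $i\le j$ and coefficients that are smooth and bounded because $\gamma$ is smooth. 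Thus control of the normal covariant derivatives controls the full $H^2$--norm, yielding $B(X,X)\ge c\|X\|_{H^2}^2$. Lax--Milgram produces a weak solution $X\in H^{2,\perp}_\ga$, and elliptic regularity --- the operator being fourth order with invertible principal symbol $\xi^4$ times the identity of the normal bundle and with smooth coefficients --- upgrades $X$ to $H^{4,\perp}_\ga$ solving the equation strongly; the open mapping theorem then gives that $(\na^\perp)^4+\id$ is an isomorphism.

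With this in hand, the inclusion $H^{4,\perp}_\ga\hookrightarrow L^{2,\perp}_\ga$ is compact by the Rellich--Kondrachov theorem on $\SSS^1$, so writing $(\na^\perp)^4=((\na^\perp)^4+\id)-\id$ exhibits it as an isomorphism minus a compact operator, hence Fredholm of index zero, the index being stable under compact perturbations. The remaining two claims follow formally. Multiplication by the smooth positive function $|\ga'|$ is an isomorphism of $L^{2,\perp}_\ga$, and $\Om$ is compact from $H^{4,\perp}_\ga$ to $L^{2,\perp}_\ga$ by Remark~\ref{secvar2}; hence $\cL=|\ga'|\bigl((\na^\perp)^4+\Om\bigr)$ is the composition of an isomorphism with a Fredholm operator of index zero perturbed by a compact one, so it is again Fredholm of index zero. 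Finally $\delta^2\sE_\gamma=R\circ\cL$, where $R:L^{2,\perp}_\ga\to(L^{2,\perp}_\ga)^\star$ is the Riesz isomorphism encoding $\delta^2\sE_\gamma(X,Y)=\langle\cL(X),Y\rangle_{L^2}$; composing with the isomorphism $R$ preserves both the Fredholm property and the index.

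I expect the main obstacle to be the surjectivity step, specifically the coercivity estimate and the subsequent elliptic bootstrap: one must carefully convert between the intrinsic operators $\na^\perp$ and the ordinary $s$--derivatives in order to identify the energy norm with the $H^2$--norm, and then justify the $H^{4,\perp}_\ga$ regularity of the weak solution. By contrast, the reductions to $\cL$ and to $\delta^2\sE_\gamma$ are routine consequences of the stability of the Fredholm index under isomorphisms and compact perturbations.
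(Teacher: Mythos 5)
Your proposal is correct and follows essentially the same route as the paper's proof: reduce everything to $(\na^\perp)^4$ using compactness of $\Om$ and the isomorphisms coming from multiplication by $|\ga'|$ and the $L^2$--Riesz map, show $\id+(\na^\perp)^4:H^{4,\perp}_\ga\to L^{2,\perp}_\ga$ is invertible (injectivity by the integration by parts~\eqref{normalIP}, surjectivity by a coercive weak formulation on $H^{2,\perp}_\ga$ where coercivity comes from rewriting $\na^\perp$ in terms of $\partial_s$ with smooth bounded coefficients, followed by elliptic regularity to reach $H^{4,\perp}_\ga$), and conclude by stability of the Fredholm index under the compact inclusion $H^{4,\perp}_\ga\hookrightarrow L^{2,\perp}_\ga$. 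The only cosmetic difference is that you produce the weak solution via Lax--Milgram, while the paper minimizes the corresponding quadratic functional $\sF$ by the direct method; both yield the same weak equation~\eqref{eq:EquationForSurjectivity}.
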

\begin{proof}
Since $\de^2\sE_\ga(X,Y)=\scal{\cL(X),Y}_{L^2(\SSS^1,\R^n)}$ and $\ga$ is regular, we have that $\delta^2\sE_\gamma : H^{4,\perp}_\ga \to (L^{2,\perp}_\ga)^\star$ is Fredholm of index zero if and only if $(\na^\perp)^4  + \Om:H^{4,\perp}_\ga \to L^{2,\perp}_\ga$ is such. The operator $\Om:H^{4,\perp}_\ga \to L^{2,\perp}_\ga$ is compact, thus $(\na^\perp)^4  + \Om:H^{4,\perp}_\ga \to L^{2,\perp}_\ga $ is Fredholm of index zero if and only if the same holds for $(\na^\perp)^4:H^{4,\perp}_\ga \to L^{2,\perp}_\ga $ (see~\cite[Section 19.1, Corollary 19.1.8]{HormanderIII}), and this happens if the operator $\id + (\na^\perp)^4:H^{4,\perp}_\ga \to L^{2,\perp}_\ga $ is invertible, where $\id$ is the identity/inclusion map.

Indeed if $X\in H^{4,\perp}_\ga $ is in the kernel of $\id + (\na^\perp)^4$, then there must hold
$$
0=\int_{\SSS^1}\bigl\langle \na^\perp\nabla^\perp \na^\perp\nabla^\perp\vp+X,X\bigr\rangle\,ds=
\int_{\SSS^1}\vert\na^\perp\nabla^\perp X\vert^2+\vert X\vert^2\,ds,
$$
by means of~\eqref{normalIP}, which implies that $X=0$, and then $\id + (\na^\perp)^4$ is injective.

It remains to prove that $\id + (\na^\perp)^4:H^{4,\perp}_\ga \to L^{2,\perp}_\ga $ is surjective.
Let $Y\in L^{2,\perp}_\ga $ and consider the continuous functional $\sF:H^{2,\perp}_\ga  \to \R$ defined by
\[
\sF(X) = \int_{\SSS^1} \frac12 \bigl|(\nabla^\perp)^2X\bigr|^2 + \frac12 |X|^2 - \scal{X, Y}\,ds.
\]
An explicit computation shows that
\begin{equation}\label{eq:DoubleNormalDerivative}
(\nabla^\perp)^2X = \partial^2_sX + \left( 2\langle \partial_sX,k\rangle + \langle X, \partial_s k\rangle \right) \tau  - \langle \partial_sX,\tau\rangle k,
\end{equation}
hence,
\[
\int_{\SSS^1} |\partial^2_sX|^2\,ds \le C(\gamma) \int_{\SSS^1}  |(\nabla^\perp)^2X|^2 + |\partial_sX|^2+ |X|^2\,ds.
\]
Then, since 
$$
\int_{\SSS^1} |\partial_sX|^2\,ds = - \int_{\SSS^1}\scal{X,\partial_s^2 X}\,ds \le \varepsilon\int_{\SSS^1} |\partial_s^2X|^2 + C(\varepsilon) \int_{\SSS^1} |X|^2,
$$
for a suitable constant $C(\varepsilon)$, we conclude 
\[
\int_{\SSS^1} |X|^2 +  |\partial_sX|^2 +  |\partial^2_sX|^2\,ds \le C(\gamma) \int_{\SSS^1}  \frac12 |(\nabla^\perp)^2X|^2 + \frac12 |X|^2\,ds,
\]
which implies that the functional $\sF$ is coercive. Therefore, by the direct methods of calculus of variations, it follows that there exists a minimizer $Z$ of $\sF$ in $H^{2,\perp}_\ga $. In particular $Z$ satisfies
\begin{equation}\label{eq:EquationForSurjectivity}
\int_{\SSS^1} \bigl\langle(\nabla^\perp)^2 Z, (\nabla^\perp)^2 X\bigr\rangle + \scal{Z,X}\,ds = \int_{\SSS^1} \scal{Y, X}\,ds,
\end{equation}
for every $X\in H^{2,\perp}_\ga $. If we show that $Z \in H^{4,\perp}_\ga $, then $Z + (\nabla^\perp)^4Z = Y$, and surjectivity is proved. This follows by standard techniques, simply noticing that once writing the integrand of the functional $\sF$ in terms of $\partial_s^2X$, $\partial_sX$ and $X$, by means of equation~\eqref{eq:DoubleNormalDerivative}, its dependence on the highest order term $\partial_s^2X$ is quadratic and the ``coefficients'' are given by the geometric quantities of $\gamma$, which is smooth.
\end{proof}

\section{An abstract \L ojasiewicz--Simon gradient inequality}

In this section we present a result from~\cite{Po20Loja} collecting some conditions under which a \L ojasiewicz--Simon gradient inequality holds (see~\cite{Loja63, Loja84,Si83} for a given energy functional. This result is stated in a purely functional analytic setting for an abstract energy functional, and it can be possibly applied to different evolution equations.

Following~\cite{Ch03}, we assume that $V$ is a Banach space, $U\con V$ is open and $\sE:U\to\R$ is a map of class $C^2$. We denote with $\delta\sE:U\to V^\star$ the differential and with $\sH:U\to L(V,V^\star)$ the second differential (or Hessian) of $\sE$, respectively. We assume that $0\in U$ and we set $V_0=\ker \sH(0)\con V$.\\
We recall that a closed subspace $S\con V$ is said to be {\em complemented} if there exists a continuous {\em projection} $P:V\to V$ such that $\Imm P = S$ (a continuous projection is a linear and continuous map $P:V\to V$ such that $P\circ P = P$). In such a case, we denote by $P^\star:V^\star\to V^\star$ the {\em adjoint projection}.

\begin{prop}[{\cite[Corollary~3.11]{Ch03}}]\label{cor:Chill}
Under the above notation, assume that $\sE:U\to\R$ is analytic and $0\in U$ is a critical point of $\sE$, that is, $\delta\sE(0)=0$. Assume that $V_0$ is finite dimensional (therefore, it is complemented and has a projection map $P:V_0\to V_0$) and there exists a Banach space $W\hookrightarrow V^\star$ (that is, we identify $W$ with a subset of $V^\star$) such that
\begin{enumerate}[label=(\roman*)]
\item $\mathrm{Imm}\,\delta\sE\subseteq W$ and the map $\delta\sE:U\to W$ is analytic (with the norm of $W$), \label{IT.1}		
\item $P^\star(W)\con W$, \label{IT.2}
\item $\sH(0)(V) = \ker P^\star \cap W$. \label{IT.3}
\end{enumerate}
Then, there exist constants $C,\ro>0$ and $\al\in(0,1/2]$ such that
\begin{equation*}
|\sE(u)-\sE(0)|^{1-\al}\le C \| \delta\sE(u) \|_{W},
\end{equation*}
for any $u \in B_\ro(0)\subseteq U$.
\end{prop}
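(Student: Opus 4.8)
The plan is to perform a Lyapunov--Schmidt reduction that replaces the infinite-dimensional problem by a finite-dimensional one on $V_0=\ker\sH(0)$, where the classical \L ojasiewicz inequality for analytic functions of finitely many real variables is available, and then to transfer the resulting estimate back to $\sE$, keeping careful track of the norm of $W$. Throughout I write $V=V_0\oplus V_1$ with $V_1=\ker P$.

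First I would study the \emph{auxiliary equation} $(\id-P^\star)\delta\sE(v+w)=0$. By condition \ref{IT.2} the operator $P^\star$ maps $W$ into $W$ (and is bounded there by the closed graph theorem), so the left-hand side takes values in the closed subspace $\ker P^\star\cap W$ of $W$. The claim is that its $w$-derivative at the origin, $h\mapsto(\id-P^\star)\sH(0)[h]$ for $h\in V_1$, is an isomorphism onto $\ker P^\star\cap W$. Indeed, by condition \ref{IT.3} one has $\mathrm{Imm}\,\sH(0)=\ker P^\star\cap W\con\ker P^\star$, hence $P^\star\sH(0)=0$ and the derivative equals $\sH(0)|_{V_1}$; since $\ker\sH(0)=V_0$ this restriction is injective, while $\sH(0)(V)=\sH(0)(V_1)=\ker P^\star\cap W$ makes it onto. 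As $\sH(0)=D(\delta\sE)(0)$ is a bounded operator from $V$ into $W$ by condition \ref{IT.1}, the open mapping theorem gives that it is a topological isomorphism $V_1\to\ker P^\star\cap W$. The analytic implicit function theorem (again using that $\delta\sE$ is $W$-valued and analytic) then produces an analytic map $g$ from a neighbourhood of $0$ in the finite-dimensional space $V_0$ into $V_1$, with $g(0)=0$ and $Dg(0)=0$ (the latter because $\sH(0)$ annihilates $V_0$), solving the auxiliary equation. Setting $\Phi(v)=\sE(v+g(v))$, this is an analytic function on a ball of $V_0$, so the finite-dimensional \L ojasiewicz inequality yields $C_1>0$ and $\al\in(0,1/2]$ with $|\Phi(v)-\Phi(0)|^{1-\al}\le C_1\|\delta\Phi(v)\|_{V_0^\star}$, where $\Phi(0)=\sE(0)$.

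Next I would transfer both sides to a general $u\in B_\ro(0)$, written as $u=v+w$ with $v=Pu$ and $w=(\id-P)u$. A quantitative inverse function estimate applied to the auxiliary equation gives $\|w-g(v)\|_V\le C\|(\id-P^\star)\delta\sE(u)\|_W\le C'\|\delta\sE(u)\|_W$, since $\id-P^\star$ is bounded on $W$. For the gradient, the key algebraic fact is that $\delta\sE(v+g(v))\in\mathrm{Imm}\,P^\star$, hence annihilates $\ker P=V_1$; combined with $Dg(v)\eta\in V_1$ this yields $\delta\Phi(v)[\eta]=\langle\delta\sE(v+g(v)),\eta\rangle$ for $\eta\in V_0$, so that, using the local Lipschitz bound on $\delta\sE$, the previous estimate, and $\|\cdot\|_{V^\star}\le C\|\cdot\|_W$ (because $W\hookrightarrow V^\star$), one obtains $\|\delta\Phi(v)\|_{V_0^\star}\le\|\delta\sE(u)\|_{V^\star}+L\|g(v)-w\|_V\le C''\|\delta\sE(u)\|_W$. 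For the energy difference I would write $\sE(u)-\Phi(v)=\int_0^1\langle\delta\sE(v+g(v)+t(w-g(v))),\,w-g(v)\rangle\,dt$ and note that, since $w-g(v)\in\ker P$, the integrand equals $\langle(\id-P^\star)\delta\sE(v+g(v)+t(w-g(v))),\,w-g(v)\rangle$, which the mean value estimate bounds by $Ct\|w-g(v)\|_V^2$; integrating gives $|\sE(u)-\Phi(v)|\le C\|\delta\sE(u)\|_W^2$.

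Finally I would combine the pieces. Writing $\theta=\|\delta\sE(u)\|_W$, the reduced inequality gives $|\Phi(v)-\sE(0)|\le C\theta^{1/(1-\al)}$, while the energy comparison gives $|\sE(u)-\Phi(v)|\le C\theta^2$; since $\al\le 1/2$ we have $1/(1-\al)\le 2$, so for $\theta$ small the quadratic term is dominated by $\theta^{1/(1-\al)}$, whence $|\sE(u)-\sE(0)|\le \tilde C\,\theta^{1/(1-\al)}$, that is $|\sE(u)-\sE(0)|^{1-\al}\le \tilde C^{\,1-\al}\|\delta\sE(u)\|_W$, after shrinking $\ro$ if necessary. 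The main obstacle is setting up the reduction correctly: verifying that $\sH(0)|_{V_1}$ is an isomorphism onto $\ker P^\star\cap W$ (which is exactly what conditions \ref{IT.2}--\ref{IT.3} are designed to furnish) and securing the analyticity of $g$ via the analytic implicit function theorem; the most delicate point afterwards is the norm bookkeeping, namely ensuring that every transferred estimate is expressed in the $W$-norm and not merely in $V^\star$, since only $\|\delta\sE(u)\|_W$ is allowed on the right-hand side.
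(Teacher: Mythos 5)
Your proof is correct, but there is no internal proof in the paper to compare it with: the authors do not prove this proposition at all, they import it from the literature, remarking only that it is the special case of \cite[Corollary~3.11]{Ch03} obtained by taking $X=V$ and $Y=W$ in Chill's notation. What you have written is essentially a reconstruction of the proof of that cited result, namely the classical Lyapunov--Schmidt reduction going back to Simon. Your argument is sound and uses each hypothesis in its intended role: (ii) makes $P^\star\vert_W$ a bounded projection of $W$ (closed graph theorem), so that $\ker P^\star\cap W$ is a Banach space; (iii) turns $h\mapsto(\id-P^\star)\sH(0)h=\sH(0)h$ into a bounded bijection of $\ker P$ onto $\ker P^\star\cap W$, hence a topological isomorphism by the open mapping theorem, which is exactly what the analytic implicit function theorem needs to produce the analytic solution map $g$ of the auxiliary equation; and (i) guarantees analyticity (hence local Lipschitz continuity) of $\delta\sE$ in the $W$--norm, which your transfer estimates require. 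The three transfer steps -- the quantitative bound $\|w-g(v)\|_V\le C\|\delta\sE(u)\|_W$ from uniform invertibility of the partial derivative near the origin, the identity $\delta\Phi(v)[\eta]=\delta\sE(v+g(v))[\eta]$ for $\eta\in V_0$ (coming from $\delta\sE(v+g(v))\in\Imm P^\star$, which annihilates $\ker P$), and the quadratic estimate $|\sE(u)-\Phi(v)|\le C\|\delta\sE(u)\|_W^2$ -- together with the final absorption of the quadratic term using $1/(1-\al)\le 2$, are all correct. So your proposal is not an alternative to the paper's argument; it correctly fills in, by the standard route, the black box that the paper leaves as a citation.
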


This proposition is a special case of  Corollary~3.11 in~\cite{Ch03}, choosing $X=V$ and $Y=W$ therein. We can then prove the following consequence.

\begin{cor}[{\cite[Corollary~2.6]{Po20Loja}}]\label{cor:LojaFunctionalAnalytic}
Let $\sE:U\con V \to \R$ be an analytic map, where $V$ is a Banach space and $0\in U$ is a critical point of $\sE$. Suppose that we have a Banach space $W=Z^\star\hookrightarrow V^\star$, where $V\hookrightarrow Z$, for some Banach space $Z$, that $\mathrm{Imm}\,\delta\sE\subseteq W$ and the map $\delta\sE:U\to W$ is analytic (with the norm of $W$).  Assume also that $\sH(0)\in L(V,W)$ and it is Fredholm of index zero.\\
Then the hypotheses of Proposition~\ref{cor:Chill} are satisfied, and then there exist constants $C$, $\ro>0$ and $\al\in(0,1/2]$ such that
\begin{equation*}
|\sE(u)-\sE(0)|^{1-\al}\le C \| \delta\sE(u) \|_{W},
\end{equation*}
for any $u \in B_\ro(0)\subseteq U$.
\end{cor}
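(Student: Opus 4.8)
The plan is to verify the three numbered hypotheses of Proposition~\ref{cor:Chill} for the given data and then apply it verbatim. Hypothesis~\ref{IT.1} is already part of the assumptions of the corollary, so nothing is needed there. Set $A \coloneqq \sH(0) \in L(V,W)$. Since $A$ is Fredholm of index zero, its kernel $V_0 = \ker A$ is finite dimensional, say $\dim V_0 = d$, and its image $A(V)$ is closed in $W$ with $\mathrm{codim}_W A(V) = d$ (the index being zero). In particular $V_0$ is finite dimensional, as required in Proposition~\ref{cor:Chill}; the real work is to produce a continuous projection $P\colon V\to V$ with $\Imm P = V_0$ for which~\ref{IT.2} and~\ref{IT.3} hold simultaneously.

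First I would record an inclusion for $A(V)$. Writing $\scal{\cdot,\cdot}$ for the duality pairing between $V^\star$ and $V$, the symmetry of the Hessian gives $\scal{Av,u} = \scal{Au,v}$ for all $u,v\in V$; hence for $u\in V_0$ and any $v\in V$ one has $\scal{Av,u}=\scal{Au,v}=0$, so every element of $A(V)$ annihilates $V_0$. Thus, denoting by $V_0^\circ = \{\psi\in V^\star : \psi|_{V_0}=0\}$ the annihilator of $V_0$, we obtain $A(V)\con V_0^\circ\cap W$. Next I would build $P$ by choosing a basis $e_1,\dots,e_d$ of $V_0$ together with biorthogonal functionals living in $W$. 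Since $V\hookrightarrow Z$, the vectors $e_1,\dots,e_d$ are linearly independent in $Z$, so the Hahn--Banach theorem produces $\phi_1,\dots,\phi_d\in Z^\star = W$ with $\phi_i(e_j)=\delta_{ij}$. Define $Pv \coloneqq \sum_{i=1}^d \phi_i(v)\,e_i$; this is a continuous projection of $V$ onto $V_0$.

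It remains to read off~\ref{IT.2} and~\ref{IT.3}. The adjoint is $P^\star\psi = \sum_{i=1}^d \scal{\psi,e_i}\,\phi_i$, which shows $P^\star(W)\con W$ because each $\phi_i\in W$, giving~\ref{IT.2}. Moreover, as the $\phi_i$ are linearly independent in $V^\star$, we have $\ker P^\star = V_0^\circ$, hence $\ker P^\star\cap W = V_0^\circ\cap W$. To get~\ref{IT.3} I would upgrade the inclusion $A(V)\con V_0^\circ\cap W$ to an equality by a codimension count: the evaluation maps $w\mapsto \scal{w,e_i}$ on $W=Z^\star$ are the images of $e_1,\dots,e_d$ under the canonical injection $Z\hookrightarrow Z^{\star\star}=W^\star$, hence are linearly independent on $W$, so their common kernel $V_0^\circ\cap W$ has codimension exactly $d$ in $W$. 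Since $A(V)$ also has codimension $d$ in $W$ and is contained in $V_0^\circ\cap W$, the two subspaces coincide, establishing~\ref{IT.3}.

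With all hypotheses verified, Proposition~\ref{cor:Chill} yields the stated \L ojasiewicz--Simon inequality. The delicate point is the \emph{simultaneous} fulfillment of~\ref{IT.2} and~\ref{IT.3}: the former forces the biorthogonal functionals to lie in $W$, which is exactly what the structural assumption $W=Z^\star$ with a predual $Z\supseteq V$ guarantees via Hahn--Banach, while the latter rests on the Fredholm index-zero codimension balance together with the symmetry of $\sH(0)$ that places $A(V)$ inside $V_0^\circ$. Everything else — the finite dimensionality of $V_0$, the continuity and projection property of $P$, and the analyticity hypotheses — is either immediate or inherited directly from the assumptions of the corollary.
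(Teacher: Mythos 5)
Your proof is correct, and it verifies the hypotheses of Proposition~\ref{cor:Chill} by a genuinely different mechanism than the paper. The paper takes an \emph{arbitrary} continuous projection $P$ onto $V_0=\ker\sH(0)$ (available since $V_0$ is finite dimensional) and then verifies~\ref{IT.3} through adjoint--operator theory: it introduces the canonical injection $J:V\to Z^{\star\star}$, proves $\cH^\star\circ J=\cH$ from the symmetry of the Hessian, deduces $\dim\ker\cH^\star=\dim\ker\cH$ from index zero, identifies $J(\Imm P)=\ker\cH^\star$, and finally uses closedness of $\Imm\cH$ to get $\Imm\cH=(\ker\cH^\star)^\perp=\ker P^\star\cap W$; hypothesis~\ref{IT.2} is then checked separately via the splitting $Z^\star=(Z^\star\cap\Imm P^\star)\oplus(Z^\star\cap\ker P^\star)$. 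You instead \emph{construct} the projection from a biorthogonal system $\phi_1,\dots,\phi_d\in Z^\star=W$ (Hahn--Banach applied in $Z$), and this single choice replaces both verifications: \ref{IT.2} is immediate because $\Imm P^\star\con\mathrm{span}\{\phi_1,\dots,\phi_d\}\con W$, while~\ref{IT.3} follows from the symmetry--based inclusion $\sH(0)(V)\con V_0^\circ\cap W$ (with $V_0^\circ$ the annihilator of $V_0$ in $V^\star$) plus a codimension count, both subspaces having codimension $d=\dim V_0$ in $W$ by the index--zero assumption and the linear independence of the evaluations $J(e_i)$ on $W$. Both arguments rest on the same two pillars --- symmetry of the Hessian and Fredholm index zero --- but yours dispenses with $\cH^\star$, the bidual identification $J(\Imm P)=\ker\cH^\star$, and the closed--range identity. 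Your route is also more robust exactly where the paper is delicate: hypothesis~\ref{IT.2} is \emph{not} automatic for an arbitrary projection onto $V_0$ (if $Pv=\sum_i\eta_i(v)e_i$ with some defining functional $\eta_j\in V^\star\setminus W$, then picking $\psi\in W=Z^\star$ with $\psi(e_i)=\delta_{ij}$ gives $P^\star\psi=\eta_j\notin W$), and the paper's splitting of $Z^\star$ into $\Imm P^\star$-- and $\ker P^\star$--parts quietly presupposes the very invariance $P^\star(W)\con W$ being proved. Your Hahn--Banach placement of the defining functionals inside $W$ is precisely what guarantees it, so your argument not only is valid but handles this point more cleanly than the paper's own proof.
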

\begin{proof}
Let us denote $\cH\coloneqq \sH(0):V\to W$. By the hypotheses, the subspace $V_0 \coloneqq \ker \cH $ is finite dimensional, thus it is closed and complemented with a projection $P:V\to V$ such that $\Imm P = V_0$, moreover point~\ref{IT.1} of Proposition~\ref{cor:Chill} is satisfied.\\
We can write $V=V_0 \oplus V_1$, where $V_1=\ker P$, then if $P^\star:V^\star\to V^\star$ is the adjoint projection, we see that also $V^\star= V_0^\star \oplus V_1^\star$ and
\[
V_0^\star = \Imm P^\star, \qquad V_1^\star = \ker P^\star.
\]
We let $J_0:Z\to Z^{\star\star}$ to be the canonical isometric injection and we call $J:V\to Z^{\star\star}$ the restriction of $J_0$ to $V$. 
We claim that $\cH:V\to W$ satisfies 
\begin{equation}\label{eq:QuasiSelfAdjoint}
\cH^\star \circ J=\cH .
\end{equation}
where $\cH^\star: W^\star \to V^\star$ is the adjoint of $\cH$.\\
Indeed, since $\cH$ is symmetric (it is a second differential), for any $ v, u \in V$ and $F\coloneqq J( u ) \in J(V)\con Z^{\star\star}$ we find
\[
(\cH^\star \circ J )( u ) [ v] = \cH^\star(F)[ v]=F(\cH v)= J( u )(\cH v) = \cH v[ u ] =\cH( u )[ v].
\]
As a general consequence of the fact that $\cH$ is Fredholm of index zero, we have
\[
{\rm dim} \ker \cH= {\rm dim} \ker \cH^\star,
\]
indeed, index zero means that ${\rm dim}\ker \cH = \dim  {\mathrm{coker}}\, \cH$, where we split $W$ as
\[
W= \Imm \cH\oplus {\mathrm{coker}}\, \cH,
\]
and ${\mathrm{coker}}\, \cH$ is finite dimensional. Therefore, $W^\star = (\Imm \cH)^\star  \oplus ({\mathrm{coker}} \,\cH)^\star$ and since $\ker \cH^\star = (\Imm\cH)^\perp = ({\mathrm{coker}}\, \cH)^\star$, we conclude that 
$$
\dim \ker \cH^\star = \dim ({\mathrm{coker}}\, \cH)^\star = \dim {\mathrm{coker}}\, \cH = \dim \ker \cH.
$$
We claim that
\begin{equation}\label{eq:3}
J(\Imm P) = \ker \cH^\star \cap J(V).
\end{equation}
Indeed, by equality~\eqref{eq:QuasiSelfAdjoint} we see that
\[
\ker \cH = \ker (\cH^\star\circ J) = J^{-1}(\ker\cH^\star)
\]
and applying then $J$ on both sides, we get $J(\Imm P) =\ker\cH^\star\cap J(V)$, that is, formula~\eqref{eq:3}.\\ 
Since $\Imm P = \ker \cH$ and $J$ is injective, we have $\dim\ker\cH={\rm dim} (J(\Imm P)) = {\rm dim} \ker \cH^\star \cap J(V)$. Then, as ${\rm dim} \ker \cH = {\rm dim}\ker \cH^\star$, it follows that $\ker \cH^\star \cap J(V)= \ker \cH^\star$ and
\[
J(\Imm P) = \ker \cH^\star.
\]
Therefore, recalling that $V^{\star\star}\hookrightarrow W^\star$ and that $W\hookrightarrow V^\star$, we get
\begin{equation}
\begin{split}
(\ker\cH^\star)^\perp &= \bigl\{ w \in W\,\,:\,\,\scal{f,w}_{W^\star,W}=0\,\,\,\forall f \in J(\Imm P) \bigr\} \\
&= \bigl\{ w \in W\,\,:\,\,\scal{J(v),w}_{W^\star,W}=0\,\,\,\forall v \in \Imm P \bigr\} \\
&= \bigl\{ w \in W\,\,:\,\,\scal{w,v}_{V^\star,V}=0\,\,\,\forall v \in \Imm P \bigr\} \\
&= (\Imm P)^\perp \cap W.
\end{split}
\end{equation}
Finally, as $\Imm\cH$ is closed,  we have
\[
\begin{split}
\Imm\cH&= (\ker \cH^\star)^\perp\\
& = (\Imm P)^\perp \cap W \\
&= \bigl\{ 	f \in V^\star\,\,\,:\,\,\,\scal{f,Pv}_{V^\star,V}=0\,\,\,\forall v \in V	\bigr\} \cap W \\
&= \ker P^\star \cap W,
\end{split}
\]
then point~\ref{IT.3} of Proposition~\ref{cor:Chill} is verified.\\
We are just left with proving point~\ref{IT.2}, that is, $P^\star(Z^\star)\con Z^\star$. We observe that if we check that $P^\star(Z^\star\cap V_0^\star) \con Z^\star \cap V_0^\star$, then we are done, indeed we would get
\[
P^\star(Z^\star)= P^\star ( Z^\star \cap V_0^\star \oplus Z^\star \cap V_1^\star  ) = P^\star (Z^\star\cap V_0^\star) \con Z^\star \cap V_0^\star \con Z^\star.
\]
If $f_0\in Z^\star \cap V_0^\star$, writing any $ v\in V$ as $ v= v_0\oplus  v_1 \in V_0\oplus V_1$, we get
\[
P^\star(f_0)[ v]=f_0(P v)=f_0( v_0) = f_0( v_0)+f_0( v_1) = f_0( v),
\]
indeed,
$$
f_0( v_1)=(P^\star f_0)( v_1)=f_0(P v_1)=f_0(0)=0.
$$
Hence, we proved that $P^\star f_0=f_0$ for any $f_0\in Z^\star \cap V_0^\star$, thus we got that $P^\star(Z^\star\cap V_0^\star) \con Z^\star \cap V_0^\star$.	
\end{proof}

We mention that a result equivalent to this corollary has been recently proved independently in~\cite{Ru20}.

\section{Convergence of the elastic flow in the Euclidean space}\label{sec:Convergence}

As we said at the beginning of Section~\ref{sec:Rn}, if $\ga:\SSS^1\to\R^n$ is a regular closed curve in $H^4(\SSS^1,\R^n)\hookrightarrow C^3(\SSS^1,\R^n)$, there exists $\ro>0$ such that $\ga+\vp$ is still a regular curve, for any $\vp \in B_\ro(0)\con H^4(\SSS^1,\R^n)$. Moreover, if $\gamma$ is embedded, choosing such $\rho$ small enough, the open set $U=\{x\in\R^n\,:\,d_\ga(x)=d(x,\gamma)<\rho\}$ is a tubular neighborhood of $\gamma$ with the property of {\em unique orthogonal projection}. The ``projection'' map $\pi:U\to\gamma(\SSS^1)$ turns out to be $C^2$ in $U$ and given by $x\mapsto x-\nabla d^2_\ga(x)/2$, moreover the vector $\nabla d^2_\ga(x)$ is orthogonal to $\gamma$ at the point $\pi(x)\in\gamma(\SSS^1)$, see~\cite[Section~4]{mant4} for instance.\\
Then, given a curve $\theta\mapsto\sigma(\theta)=\ga(\theta)+X(\theta)$ with $\vp \in B_\ro(0)\con H^4(\SSS^1,\R^n)$, we define a map $\theta':\SSS^1\to\SSS^1$ as
$$
\theta'=\theta'(\theta)=\gamma^{-1}\bigl[\pi\bigl(\ga(\theta)+X(\theta)\bigr)\bigr],
$$
noticing that it is $C^2$ and invertible if $\ga'(\theta)+\vp'(\theta)$ is never parallel to the unit vector $\nabla d_\ga(\ga(\theta)+X(\theta))$, which is true if we have (possibly) chosen a smaller $\rho$ (hence, $|X|$ and $|X'|$ are small and the claim follows as $\langle \gamma'(\theta),\nabla d_\ga(x)\rangle\to0$, as $x\to\ga(\theta)$).\\
We consider the vector field along $\gamma$,
$$
Y(\theta')=\nabla d^2_\ga(\ga(\theta)+X(\theta))/2
$$
which, for every $\theta'\in\SSS^1$, is orthogonal to $\gamma$ at the point $\pi(\ga(\theta)+X(\theta))=\gamma(\theta')$ by what we said above and the definition of $\theta'=\theta'(\theta)$, hence it is a normal vector field along the curve $\theta'\mapsto\gamma(\theta')$. Thus, we have
\begin{align*}
\gamma(\theta')+Y(\theta')=&\,\pi\bigl(\ga(\theta)+X(\theta)\bigr)+\nabla d^2_\ga(\ga(\theta)+X(\theta))/2\\
=&\,\ga(\theta)+X(\theta)-\nabla d^2_\ga(\ga(\theta)+X(\theta))/2+\nabla d^2_\ga(\ga(\theta)+X(\theta))/2\\
=&\,\ga(\theta)+X(\theta)
\end{align*}
and we conclude that the curve $\sigma=\ga+\vp$ can be described by the (reparametrized) regular curve $\widetilde{\sigma}=\ga+Y$, with $Y$ a normal vector field along $\gamma$ in $H^4(\SSS^1,\R^n)$ as $X$, that is, $Y\in H^{4,\perp}_\ga$. Moreover, it is clear that if $X\to0$ in $H^{4}(\SSS^1,\R^n)$ then also $Y\to0$ in $H^{4,\perp}_\ga$.\\
All this can be done also for a regular curve $\gamma$ which is only {\em immersed} (that is, it can have self--intersections), recalling that locally every immersion is an embedding and repeating the above argument a piece at a time along $\gamma$, getting also in this case a normal field $Y$ describing a curve $\sigma$ which is $H^4$--close enough to $\gamma$, that is $\Vert\sigma-\gamma\Vert_{H^4(\SSS^1,\R^n)}<\rho_\ga$, for some $\rho_\ga>0$, as a ``normal graph'' on $\gamma$, as in the embedded case. 

\medskip

We recall now some further details about the sub--convergence of the elastic flow stated in Proposition~\ref{thm:SubConvergence}. We set $\gamma_t=\gamma(t,\cdot)$ and we let $\ga_\infty$, $t_j$, $p_j$ and $\overline{\gamma}_{t_j}=\overline{\gamma}(t_j,\cdot)$ be the reparametrization of $\gamma_{t_j}$ as in Proposition~\ref{thm:SubConvergence}, then 
\[
\overline{\ga}_{t_j}-p_j \xrightarrow[j\to+\infty]{} \ga_\infty
\]
in $C^m(\SSS^1,\R^n)$ for any $m\in \N$. Moreover, there are positive constants $C_L=C_L(\ga_0)$ and $C(m,\ga_0)$, for any $m\in \N$, such that
\[
\frac{1}{C_L} \le L(\ga_t) \le C_L
\]
and
\begin{equation}\label{eq:StimaParabolica}
\| (\nabla^\perp)^m k(t,\cdot) \|_{L^2(ds)}\le C(m,\ga_0)
\end{equation} 
for every $t\ge0$. These facts follow from the results in~\cite{DzKuSc02,PoldenThesis}, see~\cite[Section 3]{DzKuSc02} in particular.\\
It is then a straightforward computation to see that, if we describe a curve of the flow $\gamma_t=\gamma_\infty+\vp$, which is $H^4$--close enough to $\gamma_\infty$ (precisely, $\vp \in B_\ro(0)\con H^4(\SSS^1,\R^n)$, with $\rho=\rho_{\ga_\infty}$ as above), as a ``normal graph'' on $\gamma_\infty$, that is $\widetilde{\gamma}=\gamma_\infty+Y_t$ with $Y\in H^{4,\perp}_{\ga_\infty}$, we have
\begin{equation}\label{eq:StimaParabolica2}
\|Y_t\|_{H^m}\le C(m,\ga_0,\ga_\infty)\,,
\end{equation} 
for every $m\in\N$.

\begin{defn}
Let $\ga:\SSS^1\to\R^n$ be a regular curve of class $H^4$. We consider $\ro=\ro_\ga>0$ as above and we define the functional
\[
E:B_\ro(0)\con H^{4,\perp}_\ga\to\R \qquad\qquad E(\vp)=\sE(\ga+\vp)\,.
\] 
\end{defn}

By the conclusions of Section~\ref{sec:Rn}, we have 
$$
\de E : B_\ro(0) \con H^{4,\perp}_\ga \to (L^{2,\perp}_\ga)^\star,
$$
given by $X\mapsto\delta E_X=\delta\sE_{\ga+X}$, acting as
$$
\delta E_X(Y)= \left\langle |\ga'+X'|\Bigl(\bigl(\na_{\ga+X}^\perp\bigr)^2k_{\ga+X} + |k_{\ga+X}|^2k_{\ga+X}/2 - k_{\ga+X}\Bigr), Y \right\rangle_{L^2(\SSS^1,\R^n)}
$$
on every $Y\in L^{2,\perp}_\ga$.\\
The second variation $\de^2 E_0$ of $E$ at $0\in H^{4,\perp}_\ga$ clearly coincides with the second variation of $\sE$ at $\ga$, that is,
$$
\de^2 E_0 = \delta^2\sE_\gamma: H^{4,\perp}_\ga \to (L^{2,\perp}_\ga)^\star,
$$
and we have
\[
\delta^2E_0(X,Y) = 
\bigl\langle\cL(\vp), Y\bigr\rangle_{L^2(\SSS^1,\R^n)},
\]
where $\cL(\vp)= |\ga'|\left((\na^\perp)^4 \vp  + \Om(\vp)\right)$.

\begin{prop}\label{prop:Loja}
Let ${\ga_\infty}:\SSS^1\to \R^n$ be a critical point of $\sE$. Then there exist constants $C,\sigma>0$ and $\al \in (0,1/2]$ such that
\begin{equation}\label{eq:LojaFull}
|\sE({\ga_\infty}+Y)-\sE({\ga_\infty})|^{1-\al}\le C \| \delta E_Y \|_{(L^{2,\perp}_{\ga_\infty})^\star}
\end{equation}
for any $Y \in B_\sigma(0)\subseteq H^{4,\perp}_{\ga_\infty}$, where the functional $E$ at the right hand side is relative to the curve $\gamma_\infty$. 
\end{prop}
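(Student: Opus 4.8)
The plan is to obtain the inequality as a direct application of the abstract Corollary~\ref{cor:LojaFunctionalAnalytic} to the functional $E(\varphi)=\sE(\ga_\infty+\varphi)$ at the critical point $0$. I choose $V=H^{4,\perp}_{\ga_\infty}$, $Z=L^{2,\perp}_{\ga_\infty}$ and $W=Z^\star=(L^{2,\perp}_{\ga_\infty})^\star$; since $Z$ is a Hilbert space the identification $W=Z^\star$ is legitimate, and dualizing the dense continuous Sobolev inclusion $V=H^{4,\perp}_{\ga_\infty}\hookrightarrow L^{2,\perp}_{\ga_\infty}=Z$ gives the continuous inclusion $W=Z^\star\hookrightarrow V^\star$ demanded by the corollary. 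The origin $0\in B_\ro(0)=U$ is a critical point of $E$ because $\ga_\infty$ is critical for $\sE$, that is $\delta E_0=\delta\sE_{\ga_\infty}=0$. Moreover, by the computations of Section~\ref{sec:Rn} the Hessian $\sH(0)=\delta^2E_0=\delta^2\sE_{\ga_\infty}$ is represented through the $L^2(\SSS^1,\R^n)$ pairing by $\cL=|\ga_\infty'|\big((\na^\perp)^4+\Om\big)$, so it belongs to $L(V,W)$, and Lemma~\ref{lem:FredholmRn} asserts precisely that it is Fredholm of index zero. Thus the only remaining hypotheses to verify are the analyticity of $E:U\to\R$ and of $\delta E:U\to W$ (which also entails $\Imm\,\delta E\con W$).

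To establish analyticity I intend to use three standard facts: for $m\ge1$ the Sobolev space $H^m(\SSS^1,\R^n)$ is a Banach algebra continuously embedded in $C^0$; the maps $u\mapsto u^{-1}$ and $u\mapsto\sqrt u$ are analytic on the open set of such functions uniformly bounded away from zero; and sums, products and compositions of analytic maps between Banach spaces are again analytic. Writing $E(\varphi)=\int_{\SSS^1}\big(1+\tfrac12|k_{\ga_\infty+\varphi}|^2\big)\,|\ga_\infty'+\varphi'|\,d\theta$, I first note that for $\varphi$ in a sufficiently small ball the vector $\ga_\infty'+\varphi'$ stays uniformly bounded away from zero (as $\varphi'\in H^3\hookrightarrow C^0$ is small and $\ga_\infty$ is regular). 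The whole integrand is then assembled from $\ga_\infty'+\varphi'$, $\ga_\infty''+\varphi''$ and the scalar $|\ga_\infty'+\varphi'|^{-1}$ by the algebra operations above, so it depends analytically on $\varphi\in H^{4,\perp}_{\ga_\infty}$ with values in $L^1(\SSS^1)$; composing with the bounded linear integration functional yields the analyticity of $E$.

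For $\delta E$ the same scheme applies to its $L^2$ representative, and I expect this step --- analyticity into the \emph{stronger} target $W$, rather than merely into $V^\star$ where it would be automatic --- to be the main obstacle. By the first variation formula, $\delta E_\varphi$ is represented through the $L^2(\SSS^1,\R^n)$ scalar product by the field $G(\varphi)=|\ga_\infty'+\varphi'|\big((\na^\perp_{\ga_\infty+\varphi})^2k_{\ga_\infty+\varphi}+\tfrac12|k_{\ga_\infty+\varphi}|^2k_{\ga_\infty+\varphi}-k_{\ga_\infty+\varphi}\big)$. As $Z=L^{2,\perp}_{\ga_\infty}$ is Hilbert, an element of $W=Z^\star$ varies analytically with $\varphi$ if and only if its $L^2$ representative does, so it suffices to show that $\varphi\mapsto G(\varphi)$ is analytic from $H^{4,\perp}_{\ga_\infty}$ into $L^2(\SSS^1,\R^n)$. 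Its top order part is the linear term $(\na^\perp)^4\varphi$, continuous from $H^4$ into $L^2$; every other term is a product of a factor carrying at most three arclength derivatives of $\varphi$ or $\ga_\infty$ (hence lying in $H^1\hookrightarrow C^0$) with a factor in $L^2$, multiplied by analytic functions of $|\ga_\infty'+\varphi'|^{-1}$, so the algebra and composition properties recalled above give analyticity into $L^2$ and in particular $\Imm\,\delta E\con W$. Having verified all the hypotheses of Corollary~\ref{cor:LojaFunctionalAnalytic}, the \L ojasiewicz--Simon inequality~\eqref{eq:LojaFull} follows with $\sigma=\ro$, for suitable constants $C>0$ and $\al\in(0,1/2]$.
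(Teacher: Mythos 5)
Your proposal is correct and follows essentially the same route as the paper: it applies Corollary~\ref{cor:LojaFunctionalAnalytic} with the identical choices $V=H^{4,\perp}_{\ga_\infty}$, $Z=L^{2,\perp}_{\ga_\infty}$, $W=Z^\star$, verifies criticality of $0$, and gets the Fredholm index-zero property of $\de^2 E_0$ from Lemma~\ref{lem:FredholmRn}. The only difference is that where the paper disposes of the analyticity of $E$ and $\de E$ by citing \cite[Lemma~3.4]{DaPoSp16}, you sketch that argument yourself (Banach algebra structure of $H^m(\SSS^1)$, analyticity of inversion and square root away from zero, stability under composition), which is exactly the mechanism the cited reference formalizes.
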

\begin{proof}
We apply Corollary~\ref{cor:LojaFunctionalAnalytic} to the functional $E:B_{\ro}(0)\con H^{4,\perp}_{\ga_\infty}\to \R$, where $\rho>0$ is  as above, with $V=H^{4,\perp}_{\ga_\infty}$, $W=(L^{2,\perp}_{\ga_\infty})^\star$, and $Z=L^{2,\perp}_{\ga_\infty}$. From the above discussion we have that the first variation $\de E$ (respectively, the second variation $\de^2 E_0$, evaluated at $0$) is defined on $B_{\ro}(0) \con V$ (respectively, on $V$) and it is $W$--valued (the same for $\de^2 E_0$). Moreover, $0\in H^{4,\perp}_{\ga_\infty}$ is a critical point of $E$, by assumption and we have that $\de^2 E_0:V\to W$ is a Fredholm operator of index zero by Lemma~\ref{lem:FredholmRn}, as it coincides with $\de^2\sE_{\ga_\infty}$. Finally, both $E$ and $\de E$ are analytic as maps between $B_{\ro}(0)$ and $\R$, $W$ (with its norm) respectively (this can be proved directly by noticing that $E$ and $\de E$ are compositions and sums of analytic functions -- for a detailed proof of this fact we refer to~\cite[Lemma~3.4]{DaPoSp16}).\\
Therefore, we can apply Corollary~\ref{cor:LojaFunctionalAnalytic} and we conclude that get that there exist constants $C,\sigma>0$ and $\al \in (0,1/2]$ such that
\begin{equation}\label{eq:LojaE}
|E(Y)-E(0)|^{1-\al}\le C \| \de E_Y \|_{(L^{2,\perp}_{\ga_\infty})^\star}
\end{equation}
for any $Y \in B_{\sigma}(0)\con H^{4,\perp}_{\ga_\infty}$ and we are done.
\end{proof}

Now we are ready to prove the full convergence of the flow. 

\begin{proof}[Proof of Theorem~\ref{thm:FullConvergence}]
As before, we set $\gamma_t=\gamma(t,\cdot)$ and we let $\ga_\infty$, $t_j$, $p_j$ and $\overline{\gamma}_{t_j}=\overline{\gamma}(t_j,\cdot)$ be as in Proposition~\ref{thm:SubConvergence}. Moreover, to simplify the notation we denote with $L^2(d\theta)$ the space $L^2(\SSS^1,\R^n)$.\\
We start with noticing that along the flow the elastic functional is monotone nonincreasing as
$$
\frac{d\,}{dt}\sE(\gamma_t)=-\int_{\SSS^1} \bigl\vert (\na^\perp)^2 k - |k|^2 k/2 + k\bigr\vert^2\,ds=-\Vert\partial_t\gamma\Vert_{L^2(ds)}^2
$$
and actually we can assume that it is decreasing in every time interval, otherwise at some time $t_0$ the curve $\gamma_{t_0}$ is a critical point, then the flow stops and the theorem clearly follows. As $i\leq j$ implies $t_i\leq t_j$, we have
$$
\sE(\overline{\ga}_{t_i}-p_i)=\sE(\ga_{t_i})\geq\sE(\ga_{t_j})=\sE(\overline{\ga}_{t_j}-p_j) 
$$
which clearly implies, as $\overline{\ga}_{t_j}-p_j \to \ga_\infty$ in $C^m(\SSS^1,\R^n)$, that $\sE(\ga_{t_i})=\sE(\overline{\ga}_{t_i}-p_i)\geq\sE(\ga_\infty)$, for every $i\in\N$ and $\sE(\ga_t)\searrow\sE(\ga_\infty)$, as $t\to+\infty$.\\
Thus, it is well defined the following positive function
\[
H(t)= \left[\sE(\ga_t) - \sE(\ga_\infty) \right]^\al,
\]
where $\al\in(0,1/2]$ is given by Proposition~\ref{prop:Loja}. The function $H$ is monotone decreasing and converging to zero as $t\to+\infty$ (hence, bounded above by $H(0)= \left[\sE(\ga_0) - \sE(\ga_\infty) \right]^\al$).

Now let $m\ge6$ be a fixed integer. By Proposition~\ref{thm:SubConvergence}, for any $\ep>0$ there exists $j_\ep\in\N$ such that
\[
\| \overline{\ga}_{t_{j_\ep}} - p_{j_\ep} -\ga_\infty \|_{C^m(\SSS^1,\R^n)}\le \ep\qquad\text{ and }\qquad H(t_{j_\ep})\le\ep.
\]
Choosing $\varepsilon>0$ small enough, in order that 
$$
(\overline{\ga}_{t_{j_\ep}} - p_{j_\ep} -\ga_\infty)\in B_{\rho_{\ga_\infty}}(0)\con H^4(\SSS^1,\R^n),
$$
by the argument at the beginning of this section (with $\gamma=\gamma_\infty$), for every $t$ in some interval $[t_{j_\varepsilon},t_{j_\ep} + \de)$ there exists $Y_t \in H^{4,\perp}_{\ga_\infty}$ such that the curve $\widetilde{\ga}_t = \ga_\infty + Y_t$ is the ``normal graph'' reparametrization of $\gamma_t-p_{j_\ep}$, hence 
$$
(\pa_t \widetilde{\ga})^\perp = -(\na_{\widetilde\ga_t}^\perp)^2k_{\widetilde\ga_t} - |k_{\widetilde\ga_t}|^2k_{\widetilde\ga_t}/2 + k_{\widetilde\ga_t}
$$
where $k_{\widetilde{\ga}_t}$ is the curvature of $\widetilde{\ga}_t$ (as the flow is invariant by translation and changing the parametrization of the evolving curves only affects the {\em tangential part} of the velocity). Since $\widetilde{\ga}_{t_\ep}$ is such reparametrization of $\overline{\ga}_{t_{j_\ep}} - p_{j_\ep}$ and this latter is close in $C^m(\SSS^1,\R^n)$ to $\ga_\infty$, possibly choosing smaller $\ep,\delta>0$ above, it easily follows that for every $t\in[t_{j_\varepsilon},t_{j_\ep} + \de)$ there holds
$$
\| Y_t \|_{H^4} <\si,
$$
where $\si>0$ is as in Proposition~\ref{prop:Loja} applied on $\ga_\infty$, and we possibly choose it smaller than the constant $\rho_\infty$.
 
We want now to prove that if $\ep>0$ is sufficiently small, then actually we can choose $\de=+\infty$ and $\| Y_t \|_{H^4} <\si$ for every time.

For $E$ as in Proposition~\ref{prop:Loja}, we have
\begin{align}
[\sE(\ga_t)-\sE(\ga_\infty)]^{1-\al}=&\,[\sE(\widetilde\ga_t)-\sE(\ga_\infty)]^{1-\al}\nonumber\\
=&\,\left[E(Y_t) - E(0) \right]^{1-\al}\nonumber\\
\le&\, C_1(\gamma_\infty,\sigma) \| \de E_{Y_t} \|_{(L^{2,\perp}_{\ga_\infty})^\star}\nonumber\\
=&\,C_1(\gamma_\infty,\sigma)\sup_{\Vert S\Vert_{L^{2,\perp}_{\ga_\infty}=1}}\int_{\SSS^1}  \left\langle |\widetilde\ga_t'|\bigl((\na_{\widetilde\ga_t}^\perp)^2k_{\widetilde\ga_t} + |k_{\widetilde\ga_t}|^2k_{\widetilde\ga_t}/2 - k_{\widetilde\ga_t}\bigr), S \right\rangle\,d\theta\nonumber\\
\leq&\,C_1(\gamma_\infty,\sigma)\sup_{\Vert S\Vert_{L^2(\SSS^1,\R^n)=1}}\int_{\SSS^1}  \left\langle |\widetilde\ga_t'|\bigl((\na_{\widetilde\ga_t}^\perp)^2k_{\widetilde\ga_t} + |k_{\widetilde\ga_t}|^2k_{\widetilde\ga_t}/2 - k_{\widetilde\ga_t}\bigr), S \right\rangle\,d\theta\nonumber\\
=&\,C_1(\gamma_\infty,\sigma)\left(\int_{\SSS^1}  |\widetilde\ga_t'|^2\bigl\vert(\na_{\widetilde\ga_t}^\perp)^2k_{\widetilde\ga_t} + |k_{\widetilde\ga_t}|^2k_{\widetilde\ga_t}/2 - k_{\widetilde\ga_t}\bigr\vert^2\,d\theta\right)^{1/2}\label{eqcar1}
\end{align}
where we can assume that $C_1(\gamma_\infty,\sigma)\geq 1$.\\
Now, $\scal{\widetilde{\ga}_t,\tau_{\ga_\infty}}=\scal{\ga_\infty,\tau_{\ga_\infty}}$ is time independent, then $\scal{\pa_t \widetilde{\ga},\tau_{\ga_\infty}}=0$ and possibly taking a smaller $\si>0$, we can suppose that $|\tau_{\ga_\infty}-\tau_{\widetilde{\ga}}|\le \tfrac12$ for any $t\ge t_{j_\ep}$ such that $\| Y_t \|_{H^4} <\si$. Hence,
\[
|(\pa_t\widetilde{\ga})^\perp|= | \pa_t \widetilde{\ga} - \scal{  \pa_t \widetilde{\ga} , \tau_{\widetilde{\ga}} } \tau_{\widetilde{\ga}} | 
=  | \pa_t \widetilde{\ga} + \scal{  \pa_t \widetilde{\ga} , \tau_{\ga_\infty} -  \tau_{\widetilde{\ga}} } \tau_{\widetilde{\ga}} |\ge |\pa_t\widetilde{\ga}| - |\pa_t\widetilde{\ga}||\tau_{\ga_\infty}-\tau_{\widetilde{\ga}}|
 \ge \frac12 |\pa_t\widetilde{\ga}|.
\]
Differentiating $H$, we then get
\begin{align}
\frac{d}{dt} H(t)=&\,\frac{d}{dt}[\sE(\widetilde\ga_t)-\sE(\ga_\infty)]^{\al}\\
=&\,\al H^{\frac{\al-1}{\al}}\delta \sE_{\widetilde\ga_t}(\partial_t\widetilde\ga)\\
=&\,-\al H^{\frac{\al-1}{\al}}\int_{\SSS^1}  |\widetilde\ga_t'|\bigl\vert(\na_{\widetilde\ga_t}^\perp)^2k_{\widetilde\ga_t} + |k_{\widetilde\ga_t}|^2k_{\widetilde\ga_t}/2 - k_{\widetilde\ga_t}\bigr\vert^2\,d\theta\nonumber\\
\leq&\,-\al H^{\frac{\al-1}{\al}}C_2(\ga_\infty,\sigma)\left(\int_{\SSS^1}  \bigl\vert(\partial_t\widetilde\ga)^{\perp}\bigr\vert^2\,d\theta\right)^{1/2}\left(\int_{\SSS^1}  |\widetilde\ga_t'|^2\bigl\vert(\na_{\widetilde\ga_t}^\perp)^2k_{\widetilde\ga_t} + |k_{\widetilde\ga_t}|^2k_{\widetilde\ga_t}/2 - k_{\widetilde\ga_t}\bigr\vert^2\,d\theta\right)^{1/2}\nonumber\\
\leq&\,-H^{\frac{\al-1}{\al}}C(\ga_\infty,\sigma) \|\pa_t\widetilde\ga \|_{L^2(d\theta)}[\sE(\widetilde\ga_t)-\sE(\widetilde\ga_\infty)]^{1-\al}\\ 
=&\,-C(\ga_\infty,\sigma)\|\pa_t\widetilde\ga \|_{L^2(d\theta)},\label{eqcar777}
\end{align}
where $C(\ga_\infty,\sigma)=\al C_2(\ga_\infty,\sigma)/2C_1(\ga_\infty,\sigma)$. This inequality clearly implies the estimate
\begin{equation}\label{eqcar778}
C(\ga_\infty,\sigma)\int_{\xi_1}^{\xi_2}\|\pa_t\widetilde\ga \|_{L^2(d\theta)}\,dt\leq H(\xi_1)-H(\xi_2)\leq H(\xi_1)
\end{equation}
for every $t_{j_\ep}\le \xi_1<\xi_2<t_{j_\ep}+\delta$ such that $\| Y_t \|_{H^4} <\si$. Hence, for such $\xi_1,\xi_2$ we have
\begin{align}
\Vert\widetilde\ga_{\xi_2}-\widetilde\ga_{\xi_1}\Vert_{L^2(d\theta)}
=&\,\left(\int_{\SSS^1} |\widetilde\ga_{\xi_2}(\theta)-\widetilde\ga_{\xi_1}(\theta)|^2\,d\theta\right)^{1/2}\\
\leq&\,\biggl(\int_{\SSS^1} \left(\int_{\xi_1}^{\xi_2}\partial_t\widetilde\ga(t,\theta)\,dt\,\right)^2d\theta\biggr)^{1/2}\\
=&\,\left\Vert\int_{\xi_1}^{\xi_2}\partial_t\widetilde\ga\,dt\,\right\Vert_{L^2(d\theta)}\\
\leq&\,\int_{\xi_1}^{\xi_2}\Vert\partial_t\widetilde\ga\Vert_{L^2(d\theta)}\,dt\\
\leq&\, \frac{{H(\xi_1)}}{C(\ga_\infty,\sigma)}\\
\le&\, \frac{\ep}{C(\ga_\infty,\sigma)},
\label{eq:CauchyL2}
\end{align}
where we used that $H(\xi_1)\le H(t_{j_\ep})\le \ep$ and the fact that $\bigl\Vert\int_{\xi_1}^{\xi_2} v\,dt\,\bigr\Vert_{L^2(d\theta)}\leq\int_{\xi_1}^{\xi_2}\Vert v\Vert_{L^2(d\theta)}\,dt$, holding for every smooth function $v:[{\xi_1},{\xi_2}]\times\SSS^1\to\R^n$, indeed
\begin{align*}
\left\Vert\int_{\xi_1}^{\xi_2} v\,dt\,\right\Vert_{L^2(d\theta)}^2
\leq&\,\int_{\SSS^1} \left(\int_{\xi_1}^{\xi_2}v(t,\theta)\,dt\,\right)^2d\theta\\
=&\,\int_{\SSS^1}\int_{\xi_1}^{\xi_2}v(t,\theta)\left(\int_{\xi_1}^{\xi_2}v(r,\theta)\,dr\,\right)dt\,d\theta\\
=&\,\int_{\xi_1}^{\xi_2}\int_{\SSS^1}v(t,\theta)\left(\int_{\xi_1}^{\xi_2}v(r,\theta)\,dr\,\right)d\theta\,dt\\
\leq&\,\int_{\xi_1}^{\xi_2}\Vert v\Vert_{L^2(d\theta)}\left\Vert\int_{\xi_1}^{\xi_2} v\,dt\,\right\Vert_{L^2(d\theta)}\,dt
\end{align*}
and such inequality follows.

Therefore, for $t\ge t_{j_\ep}$ such that $\| Y_t \|_{H^4} <\si$, we have
\[
\|Y_t\|_{L^2(d\theta)}=\| \widetilde{\ga}_t - \ga_\infty \|_{L^2(d\theta)}\le \| \widetilde{\ga}_t - \widetilde{\ga}_{t_{j_\ep}} \|_{L^2(d\theta)}  + 
\|   \widetilde{\ga}_{t_{j_\ep}}   - \ga_\infty  \|_{L^2(d\theta)}\le  \frac{\ep}{C(\ga_\infty,\sigma)} +\ep \sqrt{2\pi}.
\]
Then, by means of Gagliardo--Nirenberg interpolation inequalities (see~\cite{adams} or~\cite{Aubin}, for instance) and estimates~\eqref{eq:StimaParabolica2}, for every $l\geq 4$, we have
\[
\|Y_t \|_{H^l} \le C \|Y_t \|_{H^{l+1}}^a \|Y_t \|_{L^2(d\theta)}^{1-a} \le C(l,\ga_0,\ga_\infty,\sigma)\ep^{1-a},
\]
for some $a\in(0,1)$ and any $t\ge t_{j_\ep}$ such that $\| Y_t \|_{H^4} <\si$.\\
In particular setting $l+1=m\ge6$, if $\ep>0$ was chosen sufficiently small depending only on $\ga_0$, $\ga_\infty$ and $\sigma$, then $\|Y_t\|_{H^4}<\sigma/2$ for any time $t\ge t_{j_\ep}$, which means that we could have chosen $\delta=+\infty$ in the previous discussion.

Then, from estimate~\eqref{eq:CauchyL2} it follows that $\widetilde{\ga}_t$ is a Cauchy sequence in $L^2(d\theta)$ as $t\to+\infty$, therefore $\widetilde{\ga}_t $ converges in $L^2(d\theta)$ as $t\to+\infty$ to some limit curve $\widetilde{\gamma}_\infty$ (not necessarily coincident with $\gamma_\infty$). Moreover, by means of the above interpolation inequalities, repeating the argument for higher $m$ we see that such convergence is actually in $H^m$ for every $m\in\N$, hence in $C^m(\SSS^1,\R^n)$ for every $m\in\N$, by Sobolev embedding theorem. This implies that $\widetilde{\gamma}_\infty$ is a smooth critical point of $\sE$. As the original flow $\ga_t$ is a fixed translation of $\widetilde{\ga}_t$ (up to reparametrization) this finally completes the proof.
\end{proof}

\begin{remark}[Why is the \L ojasiewicz--Simon inequality necessary for the conclusion?]
We briefly discuss the reason why the \L ojasiewicz--Simon inequality gives a very strong improvement on the estimates and leads to the key inequality~\eqref{eq:CauchyL2}, which cannot be obtained simply by the standard variational evolution equation for the energy
$$
\frac{d\,}{dt}\sE(\gamma_t)=-\Vert\partial_t\gamma\Vert_{L^2(ds)}^2,
$$
that actually implies (in the notation and hypotheses of the previous proof)
$$
\frac{d}{dt}[\sE(\widetilde\ga_t)-\sE(\ga_\infty)]\leq-C(\ga_\infty,\sigma)\|\pa_t\widetilde\ga \|_{L^2(d\theta)}^2.
$$
This inequality is very similar to~\eqref{eqcar777} (if we choose $\al=1$ in the definition of $H$) and leads to the estimate
$$
C(\ga_\infty,\sigma)\int_{\xi_1}^{\xi_2}\|\pa_t\widetilde\ga \|_{L^2(d\theta)}^2\,dt\leq H(\xi_1)-H(\xi_2)\leq H(\xi_1)\leq\ep,
$$
which differs by~\eqref{eqcar778} {\em only for the exponent ``2'' on $\|\pa_t\widetilde\ga \|_{L^2(d\theta)}$ inside the integral}.\\
This makes a lot of difference, since in this case we are actually ``morally'' estimating the integral on an infinite time interval  
$$
C(\ga_\infty,\sigma)\int_{\xi_1}^{+\infty}\|\pa_t\widetilde\ga \|_{L^2(d\theta)}^2\,dt\leq\ep,
$$
that is the $L^2$--{\em in~time} norm of $\|\pa_t\widetilde\ga \|_{L^2(d\theta)}$, while in the above proof, by means of the \L ojasiewicz--Simon inequality, we got an estimate on the same function in the $L^1$--{\em in~time} norm. The estimate in the $L^1$--{\em in~time} norm is stronger because the time interval is unbounded and $\|\pa_t\widetilde\ga \|_{L^2(d\theta)}$ is uniformly bounded by \eqref{eq:StimaParabolica}. Therefore, trying to use such an $L^2$--in time estimate in order to get an inequality analogous to~\eqref{eq:CauchyL2}, that is, of kind
$$
\Vert\widetilde\ga_{\xi_2}-\widetilde\ga_{\xi_1}\Vert_{L^2(d\theta)}\le{C(\ga_\infty,\sigma)}\ep,
$$
clearly fails (anyway, such $L^2$--in time estimate is sufficient, and actually essential, in order to show the {\em sub--convergence} stated in Proposition~\ref{thm:SubConvergence}). So an $L^1$--{\em in~time} bound is absolutely needed and this is the reason of the key importance of the \L ojasiewicz--Simon inequality in showing the asymptotic {\em full convergence} of the flow.

We also notice that even assuming to have an inequality (in the notation and hypotheses of Proposition~\ref{prop:Loja}) as
\begin{equation*}
|\sE({\ga_\infty}+Y)-\sE({\ga_\infty})|\le C \| \delta E_Y \|_{(L^{2,\perp}_{\ga_\infty})^\star},
\end{equation*}
that corresponds to the case $\alpha=0$ in \eqref{eq:LojaFull}, this is not sufficient. Indeed, such an estimate is weaker than the  \L ojasiewicz--Simon inequality where $\al>0$, because we are interested in the case that the norm of $Y$ is small and so is the left hand side, and if we try to argue as in computations~\eqref{eqcar1} and~\eqref{eqcar777}, choosing any $\beta>0$ and setting $H=[\sE(\widetilde\ga_t)-\sE(\ga_\infty)]^{\beta}$, we obtain
\begin{align*}
\frac{d}{dt} H(t)=&\,\frac{d}{dt}[\sE(\widetilde\ga_t)-\sE(\ga_\infty)]^{\beta}\\
=&\,\beta H^{\frac{\beta-1}{\beta}}\delta \sE_{\widetilde\ga_t}(\partial_t\widetilde\ga)\\
\leq&\,-H^{\frac{\beta-1}{\beta}}C(\ga_\infty,\sigma) \|\pa_t\widetilde\ga \|_{L^2(d\theta)}[\sE(\widetilde\ga_t)-\sE(\widetilde\ga_\infty)]\\ 
=&\,-C(\ga_\infty,\sigma)H\|\pa_t\widetilde\ga \|_{L^2(d\theta)},
\end{align*}
and we get the (weak) estimate
\begin{equation}
C(\ga_\infty,\sigma)\int_{\xi_1}^{\xi_2}\|\pa_t\widetilde\ga \|_{L^2(d\theta)}\,dt\leq \log H(\xi_1)-\log H(\xi_2)
\end{equation}
which is, as before, clearly not sufficient to produce the necessary estimate on the $L^1$--{\em in~time} norm of $\|\pa_t\widetilde\ga \|_{L^2(d\theta)}$ on the time interval $[\xi_1,+\infty)$, as $\log H(\xi_2)\to-\infty$ as $\xi_2\to+\infty$. 
\end{remark}

\bibliographystyle{amsplain}
\bibliography{biblio}

\providecommand{\bysame}{\leavevmode\hbox to3em{\hrulefill}\thinspace}
\providecommand{\MR}{\relax\ifhmode\unskip\space\fi MR }
\providecommand{\MRhref}[2]{%
  \href{http://www.ams.org/mathscinet-getitem?mr=#1}{#2}
}
\providecommand{\href}[2]{#2}
\begin{thebibliography}{10}

\bibitem{adams}
R.~A. Adams, \emph{Sobolev spaces}, Academic Press [A subsidiary of Harcourt
  Brace Jovanovich, Publishers], New York-London, 1975, Pure and Applied
  Mathematics, Vol. 65.

\bibitem{Aubin}
T.~Aubin, \emph{Some nonlinear problems in {R}iemannian geometry}, Springer
  Monographs in Mathematics, Springer--Verlag, Berlin, 1998.

\bibitem{Ch03}
R.~Chill, \emph{On the {{\L}{}}ojasiewicz--{S}imon gradient inequality}, J.
  Funct. Anal. \textbf{201} (2003), no.~2, 572--601.

\bibitem{DaPl17}
A.~Dall'Acqua and A.~Pluda, \emph{Some minimization problems for planar
  networks of elastic curves}, Geometric Flows \textbf{2} (2017), no.~1,
  105--124.

\bibitem{DaPoSp16}
A.~Dall'Acqua, P.~Pozzi, and A.~Spener, \emph{The {{\L}{}}ojasiewicz--{S}imon
  gradient inequality for open elastic curves}, J. Differential Equations
  \textbf{261} (2016), no.~3, 2168--2209.

\bibitem{DzKuSc02}
G.~Dziuk, E.~Kuwert, and R.~Sch\"{a}tzle, \emph{Evolution of elastic curves in
  {$\mathbb R^n$}: existence and computation}, SIAM J. Math. Anal. \textbf{33}
  (2002), no.~5, 1228--1245.

\bibitem{HormanderIII}
L.~H\"{o}rmander, \emph{The analysis of linear partial differential operators.
  {III}}, Classics in Mathematics, Springer, Berlin, 2007, Pseudo-differential
  operators, Reprint of the 1994 edition.

\bibitem{Loja63}
S.~\L{}ojasiewicz, \emph{Une propri\'{e}t\'{e} topologique des sous--ensembles
  analytiques r\'{e}els}, Les \'{E}quations aux {D}\'{e}riv\'{e}es {P}artielles
  ({P}aris, 1962), \'{E}ditions du Centre National de la Recherche
  Scientifique, Paris, 1963, pp.~87--89.

\bibitem{Loja84}
\bysame, \emph{Sur les trajectoires du gradient d'une fonction analytique},
  Seminari di Geometria (1982/83), Universit\`{a} degli Studi di Bologna
  (1984), 115--117.

\bibitem{Ma02}
C.~Mantegazza, \emph{Smooth geometric evolutions of hypersurfaces}, Geom.
  Funct. Anal. \textbf{12} (2002), no.~1, 138--182.

\bibitem{mant4}
C.~Mantegazza and A.~C. Mennucci, \emph{Hamilton--{J}acobi equations and
  distance functions on {R}iemannian manifolds}, Appl. Math. Opt. \textbf{47}
  (2003), no.~1, 1--25.

\bibitem{PoldenThesis}
A.~Polden, \emph{Curves and surfaces of least total curvature and fourth--order
  flows}, Ph.D. Thesis, Mathematisches Institut, Univ. T\"{u}bingen,
  Arbeitbereich Analysis Preprint Server -- Univ. T\"{u}bingen, 1996,
  \url{https://www.math.uni-tuebingen.de/ab/analysis/pub/alex/haiku/haiku.html}.

\bibitem{Po20Loja}
M.~Pozzetta, \emph{Convergence of elastic flows of curves into manifolds},
  ArXiv Preprint Server --
  \href{https://arxiv.org/abs/2007.00582}{arXiv:2007.00582}, 2020.

\bibitem{Ru20}
F.~Rupp, \emph{On the {L}ojasiewicz--{S}imon gradient inequality on
  submanifolds}, J. Funct. Anal. \textbf{279} (2020), no.~8, 1--32.

\bibitem{Si83}
L.~Simon, \emph{Asymptotics for a class of nonlinear evolution equations, with
  applications to geometric problems}, Ann. of Math. (2) \textbf{118} (1983),
  no.~3, 525--571.

\end{thebibliography}

\end{document}